\documentclass[a4paper,12pt]{amsart}
\usepackage{color}
\usepackage{amsmath}
\usepackage{amssymb}
\usepackage{amsthm}
\usepackage[mathscr]{eucal}
\usepackage{xcolor}

\numberwithin{equation}{section}

\newcommand{\id}{\operatorname{id}}
\newcommand{\R}{\operatorname{\mathbb{R}}}

\newcommand{\Laplace}{\Delta}

\newcommand{\Ga}{\operatorname{\Gamma}}
\newcommand{\la}{\operatorname{\lambda}}
\newcommand{\La}{\operatorname{\Lambda}}
\newcommand{\ka}{\operatorname{\kappa}}

\def\om{\omega}
\newcommand{\sd}{\kern-.1em\cdot\kern-.1em}

\def\3{\ss}

\def\tr{\operatorname{tr}}

\def\D{\partial}

\theoremstyle{plain}
\newtheorem{thm}{Theorem}[section]
\newtheorem{lem}[thm]{Lemma}
\newtheorem{prop}[thm]{Proposition}
\newtheorem{cor}[thm]{Corollary}

\newtheorem{question}[thm]{Question}

\theoremstyle{definition}
\newtheorem{defn}[thm]{Definition}

\theoremstyle{remark}
\newtheorem{rem}[thm]{Remark}


\begin{document}

\title{Improvements of upper curvature bounds}

\author{Alexander Lytchak}

\author{Stephan Stadler}

\subjclass[2010]{53C20, 53C23, 58E20}

\keywords{Non-positive curvature, conformal change, minimal disc, harmonic maps}

\begin{abstract}
	We show that any space with a positive upper curvature bound has in a small neighborhood of any point a  closely related metric
	with a negative upper curvature bound.
\end{abstract}

\maketitle

\section{Introduction}

\subsection{Main result}
There is a  significant difference between the existence of global \emph{non-positive}  upper curvature bounds and the existence of \emph{some} upper curvature bound $\kappa \in \R$.
For  instance, any complete non-positively curved space is
aspherical. On the other hand, \emph{any} simplicial complex carries a metric
of curvature bounded from above by $1$ in the sense of Alexandrov, \cite{Berest}.

The main result  of this note confirms the expectation that in \emph{local} considerations the value of the upper curvature bound does not matter:

\begin{thm} \label{thm: main}
	For $\kappa \in \R$ let the metric space $(X,d)$ be CAT($\kappa$).
	Let $O=B_r(x)$ be an open ball of radius $r$ around $x$ in $X$.
	If $\kappa >0$ assume  $r<\frac {\pi} {2\sqrt \kappa}$.
Then there exists a complete CAT(-1) metric $d'$ on $O$, such that the identity map
	 $(O,d)\to (O,d')$ is locally bilipschitz.

	 \end{thm}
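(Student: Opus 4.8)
The plan is to produce $d'$ as a \emph{conformal change} of $d$ on $O$, modelled on the way the hyperbolic metric sits inside the Euclidean unit disc. First note that $O=B_r(x)$ is convex in $X$ (in a $\CAT(\kappa)$ space, balls of radius $<\tfrac{\pi}{2\sqrt\kappa}$ are convex), so $(O,d)$ with the restricted metric is again a $\CAT(\kappa)$ space, uniquely geodesic, on which the radial function $h:=\operatorname{md}_\kappa\circ d(x,\cdot)$ --- with $\operatorname{md}_\kappa$ the modified distance, i.e.\ the solution of $\varphi''=1-\kappa\varphi$, $\varphi(0)=\varphi'(0)=0$ --- is convex and satisfies the comparison inequality: along any unit-speed geodesic $\sigma$ one has $(h\circ\sigma)''\ge 1-\kappa\,(h\circ\sigma)$, so that $\Laplace h\ge 2(1-\kappa h)$ on $2$-dimensional minimal discs, where $1-\kappa h=\operatorname{cs}_\kappa(d(x,\cdot))$ is bounded below by some $c_0=c_0(\kappa,r)>0$; moreover $|\nabla h|=\operatorname{sn}_\kappa(d(x,\cdot))$ in $X$ away from $x$, though the gradient of its restriction to a surface can be smaller --- which will be the crux below. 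I would then fix a smooth, increasing, convex function $\psi\colon[0,\operatorname{md}_\kappa(r))\to\R$ with $\psi(t)\to+\infty$ as $t\to\operatorname{md}_\kappa(r)$, set $\phi:=\psi\circ h$ and $f:=e^{\phi}$, and let $d'$ be the length metric of the weighted length $\gamma\mapsto\int_\gamma f\,|\dot\gamma|_d$. Then $\id\colon(O,d)\to(O,d')$ is locally bilipschitz, since $f$ is continuous, $\inf_O f=e^{\psi(0)}>0$, and $f$ is bounded above on every small convex ball; and $d'$ is complete provided $\psi$ grows fast enough that every curve escaping from $O$ has infinite $f$-length. As $O$ is contractible, the Cartan--Hadamard theorem --- a complete, simply connected, locally $\CAT(-1)$ space is $\CAT(-1)$ --- then reduces everything to showing that $(O,d')$ is \emph{locally} $\CAT(-1)$.

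To pin down $\psi$ and verify the local bound, I would reduce to the smooth two-dimensional picture: a conformal change of a surface of constant curvature $\kappa$ by $e^{2\phi}$ has curvature $e^{-2\phi}(\kappa-\Laplace\phi)$, which is $\le-1$ exactly when $\Laplace\phi\ge\kappa+e^{2\phi}$. With $\phi=\psi\circ h$ and $\Laplace\phi=\psi'(h)\Laplace h+\psi''(h)|\nabla h|^2$, and using $\Laplace h\ge 2(1-\kappa h)$, this unwinds to the requirement
\[
2\,\psi'(h)\,(1-\kappa h)\;+\;\psi''(h)\,|\nabla h|^2\;\ge\;\kappa+e^{2\psi(h)} .
\]
On the model surface $M^2_\kappa$, where $|\nabla h|^2=\operatorname{sn}_\kappa\!\big(\operatorname{md}_\kappa^{-1}(h)\big)^2$, this is exactly the identity satisfied by the Poincar\'e factor --- in the flat case $\kappa=0$ one has $\psi(t)=-\log\!\big(\operatorname{md}_\kappa(r)-t\big)+\mathrm{const}$ --- which is increasing, convex, and blows up at $\operatorname{md}_\kappa(r)$; importantly it is the second-order term $\psi''(h)|\nabla h|^2$, not the first-order one, that dominates near $\partial O$, and its rate of blow-up is exactly what makes $d'$ complete. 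I would fix such a $\psi$.

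The remaining, and main, task is to transfer this smooth computation to the singular space $(O,d')$, and this is where minimal discs and harmonic maps enter. One route uses the description of upper curvature bounds via minimal discs: an area-minimizing disc in $(X,d)$ carries, with its intrinsic metric, an upper curvature bound --- its Gauss curvature measure is at most $\kappa$ times the area --- and conformally changing such a surface by $e^{2\phi}$ subtracts $\Laplace\phi$ times the area from that measure in the barrier sense; combined with the displayed inequality, read on the disc, this forces the new Gauss curvature measure to be at most $-1$ times the new area, giving the local $\CAT(-1)$ comparison for $(O,d')$. An alternative is Reshetnyak's majorization: a short $d'$-geodesic triangle, majorized in $(X,d)$ by a convex region $D\subset M^2_\kappa$ via a $1$-Lipschitz map $\pi$ that preserves arclength on the boundary, is then majorized by $D$ equipped with the pulled-back factor $f\circ\pi$, and one checks that $\big(D,(f\circ\pi)\,d_{M^2_\kappa}\big)$ is $\CAT(-1)$ by the same two-dimensional estimate.

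The hard part will be making this transfer rigorous, because the two-dimensional requirement above must hold \emph{pointwise on the minimal disc} (or, in the alternative route, must survive pushing through the merely $1$-Lipschitz, non-geodesic map $\pi$), and there both the bound $\Laplace h\ge 2(1-\kappa h)$ \emph{and} a lower bound on $|\nabla h|$ are needed --- yet a minimal disc running nearly tangentially to a metric sphere about $x$ would see $|\nabla h|$ collapse, destroying the second-order term that governs both the curvature drop and completeness. Showing that minimality, or harmonicity, of the disc rules out this degeneration, so that the inequality genuinely holds on it, is the technical heart of the argument; I expect it to rely on the regularity and structure theory for minimal discs and harmonic maps into $\CAT(\kappa)$ spaces, and perhaps on using a more robust exhausting function of $O$ in place of $d(x,\cdot)$.
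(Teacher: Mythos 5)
Your overall scaffolding is the same as the paper's: a radial conformal factor of the form $e^{\psi\circ h}$ blowing up at $\partial O$, local bilipschitz equivalence for free, completeness from non-integrability of the factor, Cartan--Hadamard to reduce everything to a local negative bound, and minimal discs/majorization to verify that bound (the paper additionally performs a preliminary conformal change reducing to $\kappa=0$, so that $\tfrac12 d^2(x,\cdot)$ is genuinely $1$-convex). The genuine gap is exactly the step you defer to ``regularity and structure theory'': near $\partial O$ your pointwise inequality on the disc is carried by the term $\psi''(h)\,|\nabla (h\circ u)|^2$, and no structure theory of minimal discs will give you a lower bound on $|\nabla(h\circ u)|$. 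Harmonicity does not exclude the degeneration you fear: already in $X=\R^3$ a flat minimal disc tangent to a sphere around $x$ has $\nabla(h\circ u)=0$ at the tangency point while its conformal factor equals $1$, so with a Poincar\'e-type $\psi$ (whose first-order term is of size $e^{\psi}$, one order below the required $e^{2\psi}$) the displayed inequality simply fails there; the corollary to Theorem \ref{thm: fuglede} only forbids $h\circ u$ from being constant on a set where the energy density is positive, which is far from a quantitative bound. The same objection hits your alternative route, since there is no mechanism for pushing a differential inequality through the merely $1$-Lipschitz majorization map.

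The paper's mechanism is built precisely to avoid any gradient term: its only input is directional semi-convexity. Theorem \ref{thm: fuglede}, proved by first variation of energy along the gradient flow of $f$ rather than by a chain rule on the disc, gives $\Delta(f\circ u)\ge\lambda\, e^2_u=2\lambda\varphi^2$ for harmonic $u$ and $\lambda$-convex $f$ --- a gain proportional to the full energy density of the disc, however the disc sits relative to the spheres about $x$. This feeds into Reshetnyak's theorem via Lemma \ref{lem:cat} and majorization by minimal fillings (Theorem \ref{thm:plateau}, Lemma \ref{lem:shorthom}) to give Theorems \ref{thm: confchange} and \ref{thm:locdefo}; Theorem \ref{thm: main} is then two successive conformal changes with factors $e^{h\circ g}$, $g$ a convex function of $d(x,\cdot)$ and $h$ convex with divergent derivative at the boundary value, completeness being read off from Lemma \ref{lem: distfunc} and the exhaustion of $Y$ by the balls $e^f\cdot\bar B_s(x)$. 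Be aware, though, that the tension you identified is substantive rather than technical: the semi-convexity mechanism registers only the first-order term $h'(g)$, and since the local bound it produces at a point with $f\approx F$ is of order $-\lambda e^{-2F}$, demanding a bound $\le -1$ uniformly up to $\partial O$ from this term alone forces $e^{h}$ to be radially integrable, which fights against the non-integrability needed for completeness; in the smooth model it is the ambient $|\nabla\phi|^2$-contribution to the curvature of planes orthogonal to the gradient --- invisible both to your intrinsic two-dimensional computation and to the majorization mechanism --- that keeps the curvature uniformly negative. So the missing idea in your proposal is not nondegeneracy of $|\nabla h|$ along discs (which is false) but a replacement for the second-order term; any repair must find its curvature gain elsewhere.
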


In particular, in many  questions concerning only local topological properties of CAT($\kappa $) spaces,
like most of \cite{Kleiner}, \cite{LN1}, \cite{LN2}, one may always assume $\kappa$ to be  $-1$.
\begin{rem}
 The metric $d'$ provided by the proof
of Theorem \ref{thm: main} has  the following additional properties. The distance to the central point $x$ in $(O,d')$ depends only on the distance to  $x$ in $(O,d)$.
Moreover, the tangent spaces at any point $y\in O$ with respect to both metrics $d$ and $d'$ are isometric. This  condition implies that  $(O,d')$ is geodesically complete if $(O,d)$ is locally geodesically complete, see \cite{LN1}.
\end{rem}

Our construction follows \cite{LS} and defines the metric $d'$ via a conformal change of the original metric $d$ with a sufficiently convex function. The same approach shows that any
CAT(0) space $(X,d)$ admits another CAT(0) metric which is locally \emph{negatively} curved, see Theorem \ref{thm: nonpos} below.  This result also applies  to everywhere branching   Euclidean buildings,
where some rigidity might have been expected, see \cite{KleinerLeeb}, \cite{Linus}.   However, in   Theorem \ref{thm: nonpos} the local negative curvature bound  needs  to tend to zero at infinity. Indeed,
 no conformal change of the flat Euclidean plane results in a complete Riemannian manifold of curvature $\leq -1$, \cite[Corollary 7.3]{uniform}.   Thus, the answer to the following  question,
 very natural in view of \cite{AB} and our Theorems \ref{thm: main}, \ref{thm: nonpos}, cannot be obtained by means of this paper:

\begin{question}
	Given a CAT(0) space $X$, does there exists a CAT(-1) metric on $X$ defining the same topology?
\end{question}

\subsection{Auxiliary results of  independent interest}
It has been shown in \cite{LS} that conformal changes with sufficiently convex functions preserve non-positive  curvature.
The  improvement of  the curvature bound is derived from the following
 analog of the formula expressing the curvature of a Riemannian manifold after a conformal change.

\begin{thm} \label{thm: confchange}
	For $c,C, \kappa, \lambda \in \R$, let $X$ be a CAT($\kappa$) space and let $f:X\to [c,C ]$ be a  Lipschitz continuous $\lambda$-convex function.   Further, let  $Y= e^{f} \cdot X$ denote the conformally equivalent space.
	\begin{itemize}
	\item   If $\ka  -4\lambda  \leq 0$ then $Y$ is CAT($\bar\kappa$) with
	$\bar \kappa 	= e^{-2C} \cdot  (\ka -4\lambda )$.
	
	\item   If $\kappa -4\lambda >0$ and $\lambda >0$ then $Y$ is
	CAT($\bar\kappa$) with
	 $\bar \kappa 	= e^{-2c} \cdot  (\ka -4\lambda )$.
	
	\end{itemize}

\end{thm}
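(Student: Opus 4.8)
Let me think about how to prove this conformal change formula for CAT(κ) spaces.

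The setup: We have a CAT(κ) space $X$, a Lipschitz $\lambda$-convex function $f: X \to [c, C]$, and we form $Y = e^f \cdot X$, the space with the conformally changed metric $d_Y(\gamma) = \int e^{f(\gamma(t))} |\dot\gamma(t)| dt$ for curves $\gamma$.

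The prior result from \cite{LS} is that conformal changes with sufficiently convex functions preserve non-positive curvature. So there's machinery already: the notion of how lengths, geodesics, angles transform under conformal change.

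Key steps I'd envision:

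1. **Reduce to a comparison statement.** To show $Y$ is CAT($\bar\kappa$), one typically shows geodesic triangles in $Y$ are thinner than comparison triangles in the model space $M_{\bar\kappa}$. A standard approach (especially after Reshetnyak, Alexandrov) is to verify this infinitesimally / via a differential inequality on the "excess" or on comparison angles.

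2. **Use the first variation and the structure of geodesics in $Y$.** Geodesics in $Y$ are not geodesics in $X$ in general, but one understands them via the conformal factor. The $\lambda$-convexity of $f$ controls how geodesics in $Y$ bend relative to $X$.

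3. **The curvature formula.** Classically, if $g' = e^{2\phi} g$ on a Riemannian surface, then $K' = e^{-2\phi}(K - \Delta \phi)$. Here $\phi = f$... wait, $Y = e^f \cdot X$ so the conformal factor is $e^f$, meaning $g' = e^{2f} g$ in Riemannian notation, giving $K' = e^{-2f}(K - \Delta f)$. The $\lambda$-convexity plays the role of $\Delta f \geq 2\lambda$ (in the normalization where $\lambda$-convex means $\text{Hess} f \geq \lambda$, so $\Delta f \geq 2\lambda$ in dimension 2 — hence the factor of 4? Let me reconsider: $\Delta f = \text{tr}(\text{Hess} f) \geq 2\lambda$ in dim 2, and... the "$4\lambda$" suggests maybe they're using the convention that the relevant quantity involves $2 \cdot 2\lambda$, or the comparison is with disc curvature). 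Anyway, the point estimate: $K' \leq e^{-2f}(\kappa - 4\lambda)$. To get a uniform bound we replace $e^{-2f}$ by $e^{-2C}$ when $\kappa - 4\lambda \leq 0$ (since then multiplying by a larger exponential... no — if $\kappa - 4\lambda \leq 0$, we want the bound to be as large/weak as possible, so we want $e^{-2f}$ smallest in absolute contribution... $e^{-2f} \in [e^{-2C}, e^{-2c}]$; multiplying a negative number by the *smallest* factor $e^{-2C}$ gives the *largest* (least negative) value $e^{-2C}(\kappa-4\lambda)$), and by $e^{-2c}$ when $\kappa - 4\lambda > 0$ (multiplying a positive number by the largest factor $e^{-2c}$). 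That explains the dichotomy in the statement.

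4. **Go from the Riemannian heuristic to the singular setting.** This is the real work. The method of \cite{LS}: approximate, or work directly with comparison triangles. For CAT spaces, one proves the $\bar\kappa$ bound by establishing that for small triangles the comparison angles satisfy the right monotonicity, using the convexity of $f$ to control the "extra turning" that a $Y$-geodesic does. Concretely, one writes a $Y$-geodesic as a variation and uses that $e^f$ weighting, combined with $\lambda$-convexity, forces the $Y$-triangle to satisfy a strengthened comparison — essentially Reshetnyak-type majorization into $M_{\bar\kappa}$ via an explicit majorizing map built from the conformal change.

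**Main obstacle.** The hardest part is handling the positive-curvature case ($\kappa - 4\lambda > 0$, $\lambda > 0$) and the non-smoothness simultaneously. In the Riemannian smooth case the formula is a one-line computation; here, geodesics may branch, $f$ is only Lipschitz, and the relevant "second-order" information ($\Delta f$ vs. $\lambda$) must be extracted from the synthetic $\lambda$-convexity. Controlling the local bilipschitz constants and the size of neighborhoods where the triangle comparison can be verified — and making sure the $e^{-2c}$ vs $e^{-2C}$ bookkeeping is tight — requires care. I expect the proof to proceed by: (a) establishing the infinitesimal/first-variation behavior of $Y$-geodesics in terms of $f$; (b) a clean triangle-comparison lemma (perhaps reducing to comparison with a surface/disc of the claimed curvature via an explicit conformal model); (c) a limiting argument to pass from smooth $f$ or from $X$ a surface to the general CAT(κ) case, likely by gluing/approximation as in \cite{LS}. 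The positivity hypothesis $\lambda > 0$ in the second bullet is needed precisely so that $e^f \geq e^c > 0$ cannot be exploited in the wrong direction and the comparison stays valid; I'd watch that assumption closely throughout.
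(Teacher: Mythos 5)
Your plan stops exactly where the proof has to start. The heuristic bookkeeping in steps 1--3 (the Riemannian formula $K'=e^{-2f}(K-\Delta f)$ and the choice of $e^{-2C}$ versus $e^{-2c}$) matches the statement, but step 4, which you yourself flag as ``the real work,'' contains no workable mechanism. The paper's actual route is: given a short Jordan curve $\Gamma$ in $Y$, view it in $X$ and fill it by a solution of the Plateau problem (Lemma \ref{lem:PP}, Theorem \ref{thm:plateau}); this minimal disc $u$ is harmonic and conformal with a $\kappa$-log-subharmonic conformal factor $\varphi$, which reduces everything to a two-dimensional conformal problem on the disc. The decisive analytic input is Theorem \ref{thm: fuglede}: since $u$ is harmonic and $f$ is $\lambda$-convex, $\Delta(f\circ u)\geq \lambda e^2_u=2\lambda\varphi^2$ (this is also where the factor $4\lambda=2\cdot 2\lambda$ really comes from, not from $\operatorname{tr}\operatorname{Hess} f\geq 2\lambda$ --- your own computation would give $\kappa-2\lambda$). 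Feeding this into Lemma \ref{lem:prod_of_ka} and Reshetnyak's Theorem \ref{thm:resh} (together with the area bound $\mathcal H^2<\frac{2\pi}{\kappa}$ coming from the energy bound, needed in Corollary \ref{cor:global}) shows that the doubly conformally changed disc is CAT($\bar\kappa$) and majorizes $\Gamma$ in $Y$; the recognition criteria (Proposition \ref{prop:major}, Lemma \ref{lem:shorthom}) then give that $Y$ is locally CAT($\bar\kappa$) (Theorem \ref{thm:locdefo}). None of these ingredients --- minimal discs, the subharmonicity estimate, the planar Reshetnyak theorem, the majorization criterion --- appears in your proposal beyond a passing mention of ``Reshetnyak-type majorization,'' and without them there is no way to extract second-order information from the synthetic $\lambda$-convexity.

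You also misidentify the role of the hypothesis $\lambda>0$ in the second bullet. It has nothing to do with $e^f\geq e^c>0$; it is what makes the globalization possible when $\bar\kappa>0$, where Cartan--Hadamard is unavailable. The paper applies the gradient flow $\Phi_t$ of $f$ to an arbitrary closed curve in $Y$: by the contraction estimate (Lemma \ref{lem:varinequ} / Corollary \ref{cor: distesti}) the flow shrinks $X$-length at rate $e^{-\lambda t}$ while $e^f$ does not increase along flow lines, so every closed curve in $Y$ is homotoped through curves of non-increasing $Y$-length down to arbitrarily short curves; Lemma \ref{lem:shorthom} then upgrades the local bound to a global one. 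In the remaining case $\lambda\leq 0$, $\kappa-4\lambda\leq 0$ forces $\kappa\leq 0$, so $X$ (hence $Y$) is simply connected and Cartan--Hadamard finishes. Your proposal contains no globalization argument at all, so even granting a local comparison it would not yield the theorem as stated.
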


 We refer to \cite{LS} and Subsection \ref{subsec: confchnage} below
for the definition and basic properties of conformally changed spaces and
recall that a  function $f:X\to\R$ is called $\lambda$-{\em convex
}
if  the function
$$ t\to f\circ\gamma (t)-\frac \lambda 2  \cdot  t^2$$
is convex, for any unit speed geodesic $\gamma$ in $X$.

Besides the theory of minimal discs, Theorem \ref{thm: confchange} relies  on a generalization of the classical observation that the restriction of a convex function to a harmonic map is subharmonic,
\cite{Ishihara}, \cite{KS},  \cite{Chen}, \cite{Fuglede}.  Here we derive the following natural  extension to semi-convex functions
 as a direct consequence of the properties of gradient flows of such functions.

\begin{thm} \label{thm: fuglede}
	Let $\Omega$ be a domain in  a Euclidean space $\R^n$ and let $u:\Omega \to X$ be a   harmonic map into a CAT($\kappa$) space $X$. Let $f:X\to \R$ be a  Lipschitz continuous $\lambda$-convex function.
	
	Then for  the composition $u\circ f \in W^{1,2} _{loc} (\Omega)$ the distributional Laplacian $\Delta (f\circ u)$ is a signed locally finite measure which satisfies
	$$\Delta (f\circ u) \geq \lambda \cdot  e^2 _u \;,$$
where $e^2_u \in \mathcal L^1 (\Omega)$ is the energy density of $u$.
\end{thm}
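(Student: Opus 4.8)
The plan is to run the first--variation argument for the Dirichlet energy, using the gradient flow of $f$ as a target variation of the harmonic map $u$; the improvement over the classical statement ``convex $\circ$ harmonic is subharmonic'' will come entirely from the two basic properties of the gradient flow $\Phi=(\Phi_s)_{s\ge 0}$ of the $\la$-convex function $f$: (i) each $\Phi_s$ is $e^{-\la s}$-Lipschitz; and (ii) the evolution variational inequality
\[ \tfrac12\,\tfrac{d}{ds}\,d^2\big(\Phi_s(p),q\big)\ \le\ f(q)-f\big(\Phi_s(p)\big)-\tfrac\la2\,d^2\big(\Phi_s(p),q\big)\qquad\text{for all }p,q\in X . \]
Since $f$ is Lipschitz we have $|\nabla f|\le\operatorname{Lip}(f)$, the flow exists for small times (which is all we use), and $(s,p)\mapsto\Phi_s(p)$ is locally Lipschitz.

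The regularity assertion is soft: harmonic maps lie in $W^{1,2}_{loc}(\Omega,X)$, and for Lipschitz $f$ the real function $f\circ u$ lies in $W^{1,2}_{loc}(\Omega)$ with $|\nabla(f\circ u)|\le\operatorname{Lip}(f)\,|\nabla u|$ a.e., so $\Delta(f\circ u)$ is at least a well-defined distribution. It therefore suffices to show
\[ -\int_\Omega\nabla\varphi\cdot\nabla(f\circ u)\,dx\ \ge\ \la\int_\Omega\varphi\,e^2_u\,dx\qquad\text{for all }\varphi\in C_c^\infty(\Omega),\ \varphi\ge 0, \]
because a non-negative distribution is a non-negative Radon measure and $\la e^2_u\in\mathcal L^1_{loc}(\Omega)$, so that $\Delta(f\circ u)=\la e^2_u\,dx+(\text{non-negative measure})$ is a signed locally finite measure dominated below by $\la e^2_u$.

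Fix such a $\varphi$, choose $\Omega'\Subset\Omega$ with $\operatorname{supp}\varphi\subset\Omega'$, and for small $t\ge 0$ define the competitor $u_t(x):=\Phi_{t\varphi(x)}(u(x))$. Then $u_t\in W^{1,2}(\Omega',X)$, $u_0=u$, and $u_t\equiv u$ off $\operatorname{supp}\varphi$, so $u_t$ has the same trace as $u$ on $\partial\Omega'$; since $u$ is harmonic it minimizes energy among such competitors, whence $E_{\Omega'}(u_t)\ge E_{\Omega'}(u)=E_{\Omega'}(u_0)$ and $\tfrac{d^+}{dt}\big|_{t=0}E_{\Omega'}(u_t)\ge 0$. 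To bound this derivative from above, write $x'=x+\epsilon\om$ with $\om\in\Sph^{n-1}$ and $\delta:=t\big(\varphi(x')-\varphi(x)\big)$, and assume $\delta\ge 0$ (the opposite case is symmetric after exchanging $x$ and $x'$). Using the semigroup law $\Phi_{t\varphi(x')}=\Phi_{t\varphi(x)}\circ\Phi_\delta$ with (i), and then the integrated form of (ii) applied to the flow line $r\mapsto\Phi_r(u(x'))$ and the point $u(x)$, one obtains
\[ d^2\big(u_t(x),u_t(x')\big)\ \le\ e^{-2\la t\varphi(x)}\Big[d^2\big(u(x),u(x')\big)+2\delta\big(f(u(x))-f(u(x'))\big)+O(\delta^2)+O\big(\delta\,d^2(u(x),u(x'))\big)\Big]. \]
Since $\delta=\epsilon t\,(\om\cdot\nabla\varphi(x))+o(\epsilon)$ and, by the differentiability properties of the Sobolev function $f\circ u$, $f(u(x))-f(u(x'))=-\epsilon\,(\om\cdot\nabla(f\circ u)(x))+o(\epsilon)$ for a.e.\ $x$, dividing by $\epsilon^2$, letting $\epsilon\to 0$, and averaging over $\om\in\Sph^{n-1}$ (with the Korevaar--Schoen normalization for which $n$ times the spherical average of $\om\mapsto d^2(u(x),u(x+\epsilon\om))/\epsilon^2$ tends to $e^2_u(x)$, together with $n\cdot\operatorname{avg}_\om[(\om\cdot a)(\om\cdot b)]=a\cdot b$) gives the a.e.\ pointwise bound
\[ e^2_{u_t}(x)\ \le\ e^{-2\la t\varphi(x)}\big(e^2_u(x)-2t\,\nabla\varphi(x)\cdot\nabla(f\circ u)(x)\big)+O(t^2)\ =\ e^2_u(x)-2\la t\,\varphi(x)\,e^2_u(x)-2t\,\nabla\varphi(x)\cdot\nabla(f\circ u)(x)+O(t^2), \]
with the $O(t^2)$ term dominated by an $\mathcal L^1(\Omega')$ function. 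Integrating yields $\tfrac{d^+}{dt}\big|_{0}E_{\Omega'}(u_t)\le-2\la\int_\Omega\varphi\,e^2_u-2\int_\Omega\nabla\varphi\cdot\nabla(f\circ u)$, which combined with $\tfrac{d^+}{dt}\big|_{0}E_{\Omega'}(u_t)\ge 0$ is exactly the required inequality.

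The step I expect to be the main obstacle is this pointwise energy-density estimate: reconciling (i) with the evolution variational inequality inside the (possibly singular) tangent cones of $X$, controlling the second-order remainders uniformly enough in $x$ over $\operatorname{supp}\varphi$, and justifying the interchange of the limit $\epsilon\to 0$ in the Korevaar--Schoen difference quotients for the perturbed maps $u_t$ with the $t$-expansion. This is the only place where the genuine quantitative theory of gradient flows of Lipschitz semi-convex functions on $\CAT(\ka)$ spaces enters; the rest is classical first-variation bookkeeping. Finally, the constant $\la$ is optimal: for $X=\R^m$ and $f(y)=\tfrac12|y-Q|^2$ one has $\la=1$ and $\Delta(f\circ u)=|\nabla u|^2=e^2_u$ for harmonic $u$.
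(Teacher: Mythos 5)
Your overall strategy is exactly the paper's: vary $u$ by $u_t(x)=\Phi_{t\varphi(x)}(u(x))$ along the gradient flow of $f$, bound the energy of the competitor, use harmonicity to force the first variation to be nonnegative, and read off the distributional inequality (Riesz then gives the signed locally finite measure). However, the step you yourself single out as ``the main obstacle'' --- the pointwise bound on the energy density of $u_t$ --- is precisely the content of the proof and is not actually carried out in your proposal: the EVI/semigroup manipulation of the difference quotients $d^2(u_t(x),u_t(x'))$ requires (a) validity of the integrated EVI for the pairs of points that occur (in a $\CAT(\kappa)$ space with $\kappa>0$ geodesics and $\lambda$-convexity are only available for nearby points, so you need some regularity of $u$, e.g.\ local Lipschitz continuity of harmonic maps, before the expansion in $\epsilon$ makes sense), (b) a.e.\ metric differentiability of $u$ and $u_t$ in a form strong enough to pass $\epsilon\to 0$ in the quotients uniformly in $\omega$, and (c) a domination of the $O(\delta^2)$ and $O(t^2)$ remainders by an integrable function so that the $\epsilon$-limit and the $t$-expansion can be interchanged. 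None of these is supplied, so as written there is a genuine gap at the decisive estimate.

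The paper closes exactly this gap by a cleaner device: it quotes the known estimate (Corollary 2.1, Petrunin's first-variation computation for gradient flows) on the velocity of the curve $s\mapsto \Phi_{\rho(s)}(\gamma(s))$ for an absolutely continuous curve $\gamma$ and Lipschitz $\rho$,
$$|\eta'(s)|^2 \leq e^{-2\lambda\rho(s)}\bigl(|\gamma'(s)|^2-2(f\circ\gamma)'(s)\,\rho'(s)+|\nabla^-_{\gamma(s)}f|^2\,(\rho'(s))^2\bigr),$$
and applies it to the restriction of (the representative of) $u$ that is absolutely continuous on almost every line parallel to a fixed direction $v$. This bounds the approximate metric differential $m_{u_t}(x,v)$ directly, with no $\epsilon\to 0$ interchange, no EVI, and no appeal to interior regularity of $u$; averaging over $v\in S^{n-1}$ with the identity $\frac{1}{\omega_n}\int_{S^{n-1}}\langle w_1,v\rangle\langle w_2,v\rangle\,dv=\langle w_1,w_2\rangle$ then gives the pointwise energy-density inequality you were aiming for, and the rest of your bookkeeping coincides with the paper's. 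So your proposal is salvageable, but to complete it you should replace the difference-quotient/EVI computation by this curve-wise length estimate (or else supply the regularity and limit-interchange arguments you listed, which amounts to reproving it).
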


 Any Sobolev function with vanishing energy density has a constant representative.    As an immediate  consequence we obtain:

 \begin{cor}
	Let $\Omega$ be a domain in  a Euclidean space $\R^n$ and let $u:\Omega \to X$ be a harmonic map into a CAT($\kappa $) space $X$.  Let $f:X\to \R$ be  Lipschitz and  $1$-convex.
	If the composition $f\circ u$ is constant then $u$ itself is a constant map.
\end{cor}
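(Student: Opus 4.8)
The plan is to deduce the statement directly from Theorem~\ref{thm: fuglede}. First I would apply that theorem with $\lambda = 1$: since $f$ is Lipschitz and $1$-convex, it produces $f\circ u \in W^{1,2}_{loc}(\Omega)$ together with the distributional inequality $\Delta(f\circ u) \geq e^2_u$ of signed locally finite measures on $\Omega$, where the energy density $e^2_u \in \mathcal L^1_{loc}(\Omega)$ is nonnegative.

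Next I would use the hypothesis that $f\circ u$ is constant, so that $\Delta(f\circ u)$ is the zero measure. Pairing the inequality against an arbitrary nonnegative test function $\varphi \in C_c^\infty(\Omega)$ then gives
\[
0 = \int_\Omega \varphi\, d\Delta(f\circ u) \;\geq\; \int_\Omega \varphi\, e^2_u\, dx \;\geq\; 0,
\]
which forces $\int_\Omega \varphi\, e^2_u\, dx = 0$ for all such $\varphi$ and hence $e^2_u = 0$ almost everywhere in $\Omega$. Finally I would invoke the fact recalled just before the statement — a Sobolev map with vanishing energy density admits a constant representative, where one uses that a domain $\Omega$ is connected — to conclude that $u$ is constant.

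Since every step except the last is a formal consequence of Theorem~\ref{thm: fuglede} together with the vanishing of the Laplacian of a constant function, I expect no genuine obstacle here; the only nontrivial input, namely the implication from vanishing energy density to constancy of $u$, is exactly the cited fact about Sobolev maps and requires no separate argument in this corollary.
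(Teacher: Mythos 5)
Your proof is correct and is exactly the argument the paper intends: apply Theorem \ref{thm: fuglede} with $\lambda=1$, note that the constant composition forces $\Delta(f\circ u)=0$, hence $e^2_u=0$ almost everywhere, and conclude by the fact stated just before the corollary that a Sobolev map with vanishing energy density has a constant representative. No gaps.
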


\subsection{Comments}
The proof of Theorem \ref{thm: fuglede}  does not use any specific property of the domain $\Omega$ and applies without changes to any domain in a Riemannian manifold and
to domains in admissible Riemannian polyhedra in the sense of \cite{Fuglede}, \cite{Mese-general}, \cite{Mese-Breiner-regular}.

The  proof of Theorem \ref{thm: fuglede}  does not use the upper curvature bound assumption in an essential way either. It only relies on the
existence and the contracting behaviour of gradient flows of semi-convex functions, which is valid in much greater generality. For instance these features are true in spaces with lower curvature bounds,
 \cite{AKP}, \cite{Petruninsemi} and in some spaces of probability measures, \cite{AGS}, \cite{Ohta-2}.

The importance  of the result for spaces of curvature bounded above lies in a great variety of semi-convex functions.  For instance, for every point $x$ in a CAT($\kappa$) space $X$ the  function $f(y)=d^2(x,y)$ is $1$-convex if $\kappa \leq 0$.  If $\kappa >0$ then the function $f$  is $\epsilon$-convex on the closed ball
$\bar B=B_r(\kappa)$ for any $r <\frac {\pi} {2\sqrt \kappa}$ and some $\epsilon =\epsilon (r,\kappa)>0$.   This in conjunction with Theorem \ref{thm: confchange}   leads to the proofs of
Theorems \ref{thm: main}, \ref{thm: nonpos}.

Moreover,  on any CAT(0) space $X$ the distance function $d:X\times X\to \R$ is convex. Similarly, for any closed ball $B$ of radius less than $\frac {\pi} {2\sqrt \kappa}$ in any CAT($\kappa$) space
there exists a convex function $\psi :B\times B\to \R$, comparable (up to a bounded factor) with the distance $d(x,y)$, \cite{Kendall}, \cite{Yokota}.   This directly  implies  the uniqueness of solutions
of the Dirichlet problem and  the continuous dependence of harmonic maps on their traces.
The existence of the harmonic maps with prescribed trace and their regularity involve some finer  arguments but are heavily based on
Theorem \ref{thm: fuglede} for the functions  $d$ and $\psi$, respectively,
\cite{KS}, \cite{Serbinowski}.

The proof of Theorem \ref{thm: confchange} provides also a local statement in the case $\kappa -4\lambda >0$ and $\lambda \leq 0$, see Theorem \ref{thm:locdefo}.
Moreover, as the proof of Theorem \ref{thm: main}  shows, one can localize
the statement by writing the formulas in Theorem \ref{thm: confchange} using  only local bounds and local semi-convexity  of $f$.

\subsection{Structure of the paper}
In the preliminaries we recall basics of Sobolev maps and variation of length under gradient flows of semi-convex functions.
This variation is applied in Section \ref{sec:fuglede} to obtain a proof of Theorem \ref{thm: fuglede}.
In Section \ref{sec:prepar}, we recall some structural results about spaces with upper curvature bounds, minimal discs  and conformal changes used in Section \ref{sec:main} to prove the main results of the paper.

\subsection{Acknowledgments}  We would like to thank Grigori Avramidi for posing  a question which has lead to Theorem \ref{thm: main}.

Both authors were partially supported by DFG grant  SPP 2026.

\section{Preliminaries}
\subsection{Notations and spaces with upper curvature bounds}
We refer the reader to \cite{BBI01}, \cite{BH} and \cite{AKP} for basics on metric geometry and
CAT($\kappa$) spaces. Here we just agree on notation, some finer properties will be discussed in Subsection \ref{subsec: facts}.

In this paper all CAT($\kappa$) spaces will be complete length spaces by definition.
By $d$ we will denote distances in metric spaces.  We will let $D\subset\R^2$ be the open Euclidean unit disc, $\bar D\subset\R^2$ the closed Euclidean unit disc and
$S^1=\D\bar D\subset\R^2$ the unit circle.

 For a  Lipschitz function $f:X\to \R$ on a metric space $X$ we denote by $|\nabla ^- _p f| \in [0,\infty )$
the {\em descending slope} of $f$ at $p \in X$ defined by
$$|\nabla^-_p f|=\max\{0,\limsup_{x\to p}\frac{f(p)-f(x)}{d(p,x)}\}.$$

\subsection{Semi-convex functions and their gradient flows}

Let $X$ be a CAT($\kappa$) space.
For any  Lipschitz continuous $\lambda$-convex  function $f:X\to \R$ there exists the locally Lipschitz continuous  \emph{gradient flow} $\Phi:[0,\infty) \times X \to X$  of $f$,
such that for any $x$ the flow line $t\to \Phi _t(x)$ is the gradient curve of the function $f$ starting at $x$.

As a reference  one  can use  \cite{Ohta} or \cite{Ly-open}, see also \cite{Petruninsemi},
\cite{Mayer} and \cite{AGS} for a general theory of gradient flows in metric spaces.

From all properties of gradient flows
we will only need the following distance estimate on the change of length under the gradient flow.
In \cite[Lemma 2.2.1, Lemma 2.1.4]{Petruninsemi} it is proven for Alexandrov spaces but the proof relies only on the first variation formula and is identical in our setting of
CAT($\kappa$) spaces.

\begin{cor} \label{cor: distesti}
	Let $X$ be  CAT($\kappa$) and let $f$ a  Lipschitz continuous $\la$-convex function on $X$ with   gradient flow  $\Phi$. Let
	$\gamma:[a,b] \to X$ be an absolutely continuous curve   and let $\rho :[a,b] \to [0,\infty)$ be  Lipschitz.

	Then $\eta (s):=   \Phi _{\rho (s)} (\gamma (s))$ is an  absolutely continuous  curve and   for almost all $s\in  [a,b]$
	its velocity is bounded by
		$$|\eta '(s)|^2 \leq e^{-2\lambda \cdot \rho (s)}
		(|\gamma '(s)|^2 -    2(f\circ \gamma)' (s) \cdot \rho ' (s)   + |\nabla ^- _{\gamma (s)}f |^2  \cdot (\rho'(s))^2).$$

\end{cor}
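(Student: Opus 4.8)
The strategy is to reduce the whole statement to the infinitesimal contracting behaviour of the gradient flow and the first variation formula, differentiating the curve $\eta(s)=\Phi_{\rho(s)}(\gamma(s))$ by splitting the two ways in which $s$ enters: through the basepoint $\gamma(s)$ and through the flow-time $\rho(s)$. First I would record the two basic estimates we are allowed to use. On the one hand, the gradient flow of a $\lambda$-convex function is $e^{-\lambda t}$-contracting, i.e. $d(\Phi_t(p),\Phi_t(q))\le e^{-\lambda t}\,d(p,q)$; differentiating at $t$ fixed this controls the contribution of the moving basepoint and produces the factor $e^{-2\lambda\rho(s)}|\gamma'(s)|^2$ together with the cross term. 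On the other hand, for fixed $p$ the flow line $t\mapsto\Phi_t(p)$ is the gradient curve of $f$, so it is (locally) Lipschitz with speed $|\nabla^-_{\Phi_t(p)}f|$, and the value of $f$ decreases along it at rate $|\nabla^-f|^2$; this accounts for the term $|\nabla^-_{\gamma(s)}f|^2(\rho'(s))^2$ and, via the chain rule, for the cross term $-2(f\circ\gamma)'(s)\,\rho'(s)$.

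Next I would show $\eta$ is absolutely continuous. Since $\Phi$ is locally Lipschitz on $[0,\infty)\times X$ and $s\mapsto(\rho(s),\gamma(s))$ is absolutely continuous (being Lipschitz in the first coordinate and absolutely continuous in the second), the composition $\eta$ is absolutely continuous; in particular $|\eta'(s)|$ exists for a.e.\ $s$. Fix such an $s$ at which $\gamma'(s)$, $\rho'(s)$ and $(f\circ\gamma)'(s)$ all exist. To bound $|\eta'(s)|$ I would estimate $d(\eta(s+h),\eta(s))$ for small $h>0$ by inserting the intermediate point $\Phi_{\rho(s+h)}(\gamma(s))$ and applying the triangle inequality:
\begin{align*}
d(\eta(s+h),\eta(s)) &\le d\bigl(\Phi_{\rho(s+h)}(\gamma(s+h)),\Phi_{\rho(s+h)}(\gamma(s))\bigr)\\
&\quad + d\bigl(\Phi_{\rho(s+h)}(\gamma(s)),\Phi_{\rho(s)}(\gamma(s))\bigr).
\end{align*}
The first term is at most $e^{-\lambda\rho(s+h)}\,d(\gamma(s+h),\gamma(s))$ by the contraction property, which to first order in $h$ is $e^{-\lambda\rho(s)}|\gamma'(s)|\,|h|+o(h)$. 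The second term is the distance travelled along the gradient curve of $f$ through $\gamma(s)$ between times $\rho(s)$ and $\rho(s+h)$; since the speed of that curve at time $\rho(s)$ is $|\nabla^-_{\gamma(s)}f|$ this is $|\nabla^-_{\gamma(s)}f|\,|\rho(s+h)-\rho(s)|+o(h)=|\nabla^-_{\gamma(s)}f|\,|\rho'(s)|\,|h|+o(h)$. Squaring the sum of these two contributions, taking $\limsup_{h\to 0}$, and expanding gives
$$|\eta'(s)|^2\le e^{-2\lambda\rho(s)}|\gamma'(s)|^2 + 2e^{-\lambda\rho(s)}|\gamma'(s)|\cdot|\nabla^-_{\gamma(s)}f|\,|\rho'(s)| + |\nabla^-_{\gamma(s)}f|^2(\rho'(s))^2.$$
This is close but not yet the claimed inequality; the middle term must be improved to $-2e^{-2\lambda\rho(s)}(f\circ\gamma)'(s)\,\rho'(s)$.

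The decisive sharpening — and the step I expect to be the main obstacle — is replacing the crude bound on the mixed term by the first variation formula. Rather than bounding the basepoint-displacement and the flow-time displacement separately, one should estimate $d(\eta(s+h),\eta(s))$ by computing the first-order effect of moving $\gamma(s)$ to $\gamma(s+h)$ \emph{inside} the already contracted configuration, using the angle between the direction of $\gamma'(s)$ (transported by the flow) and the gradient direction $-\nabla^-_{\gamma(s)}f$. The first variation formula in a CAT($\kappa$) space expresses this angle in terms of the derivative of $f$ along $\gamma$: concretely $(f\circ\gamma)'(s) = -\langle \nabla^-_{\gamma(s)}f,\gamma'(s)\rangle$ in the appropriate tangent-cone pairing, so the genuine cross term is $-2e^{-2\lambda\rho(s)}(f\circ\gamma)'(s)\rho'(s)$, which is what makes the bound effective (it can be negative when $f$ is decreasing along $\gamma$). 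Carrying this out rigorously requires the contraction estimate in the refined, angle-sensitive form in which it is proved in \cite[Lemma 2.1.4]{Petruninsemi}, together with the observation that the gradient flow preserves, up to the $e^{-\lambda t}$ scaling, the first-order variational data. Once the cross term is identified correctly, squaring $d(\eta(s+h),\eta(s)) \le e^{-\lambda\rho(s)}\bigl(|\gamma'(s)|^2 - 2(f\circ\gamma)'(s)\rho'(s) + |\nabla^-_{\gamma(s)}f|^2(\rho'(s))^2\bigr)^{1/2}|h| + o(h)$ and passing to the limit yields the claimed estimate. As the authors note, nothing here uses the upper curvature bound beyond the existence and contracting/first-variation behaviour of the gradient flow, so the argument is essentially that of \cite[Lemma 2.2.1]{Petruninsemi} transcribed to the CAT($\kappa$) setting.
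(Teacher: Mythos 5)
Your plan takes essentially the same route as the paper, which offers no independent argument for this corollary but cites Petrunin's Lemmas 2.2.1 and 2.1.4 (for Alexandrov spaces) and remarks that the proof uses only the first variation formula and the contracting behaviour of the gradient flow, hence transfers verbatim to the CAT($\kappa$) setting. Your reduction --- splitting the increment of $\eta$ into basepoint and flow-time contributions, recognizing that the crude triangle-inequality cross term must be sharpened to $-2(f\circ\gamma)'(s)\rho'(s)$ via the first variation formula, and deferring that sharpening to Petrunin's lemmas --- is precisely the content of that citation, so the proposal is correct in the same sense as the paper's own treatment.
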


\subsection{Sobolev maps and energy}

By now there exists a well established theory of Sobolev maps with  values in metric spaces, \cite{HKST15}.
We will follow   \cite{LWplateau} and restrict our revision to the special case needed in this paper.

Let $X$ be a complete metric space. Let $\Omega \subset \R^n$ be a Lipschitz domain and denote by $L^2(\Omega,X)$ the set of measurable and essentially
separably valued maps $u:\Omega\to X$ such that for some and thus every $x\in X$ the function $u_x(z):=d(x,u(z))$
belongs to  $L^2(\Omega)$.

\begin{defn}
 A map $u\in L^2(\Omega,X)$ belongs to the Sobolev space $W^{1,2}(\Omega,X)$ if there exists  $h\in L^2(\Omega)$ such that for every $x\in X$
 the composition $u_x$
 is contained in the classical Sobolev space $W^{1,2}(\Omega)$ and its weak gradient satisfies $|\nabla u_x|\leq h$ almost everywhere on $\Omega$.
\end{defn}

Each Sobolev map $u$ has an associated {\em trace} $\tr(u)\in L^2(\D\Omega ,X)$, see \cite{KS}. If $u$ extends continuously to a map $\hat u$ on $\bar \Omega$, then
$\tr(u)$ is represented by the restriction $\hat u|_{\D \bar \Omega}$.

There are several natural definitions of energy for Sobolev maps, see \cite[Section 4]{LWplateau}.
We will only use the {\em Korevaar--Schoen energy}. It can be defined in many different ways, for instance, using the approximate metric differentials \cite[Proposition 4.6]{LWplateau}.  The expression we are going to use is the following one.

 Any  map $u\in W^{1,2} (\Omega, X)$ has a representative, also denoted by $u$, which is absolutely continuous on \emph{almost all curves} in $\Omega$, \cite{HKST15}.  Then, for any vector $v\in \R^n$ the restriction of $u$ to almost any segment parallel to $v$ is absolutely continuous, hence has a well defined finite velocity at almost all times.
 Thus the function $m_u (x,v)$ which measures the velocity of the curve
 $t\to u(x+tv)$ at $t=0$ is well-defined almost everywhere on $\Omega \times \R^n$.  We mention that, for almost all $x\in \Omega$, the function $v\to m_u (x,v)$ is a semi-norm on $\R^n$, the \emph{approximate metric differential} of $u$ at $x$, which in fact is Euclidean, if $X$ is CAT($\kappa$), \cite{KS}, \cite[Section 11]{LWplateau}.

  The \emph{energy} density  of $u$ is defined as
$$e^2 _u (z) = \frac{1}{\om_{n}}\int_{S^{n-1}}  |m_u (x,v) |^2 \; dv \,,$$
where $\om_n$ denotes the Lebesgue measure of the unit ball in $\R^n$. The  {\em Korevaar--Schoen energy} of $u$ is given by
$$E^2(u):=\int\limits_{\Omega }  e^2 _u (z) \; dz \;.$$
The map $u\in W^{1,2} (\Omega, X)$ is called \emph{harmonic} if
 for all $w\in W^{1,2} (\Omega, X)$ with the same trace as $u$ one has
 $$E^2 (u)  \leq E^2(w) \;.$$

 If $X$ is CAT(0) or a CAT($\kappa$) space of sufficiently small diameter, then any harmonic map
 is locally Lipschitz continuous and  uniquely determined by a prescribed  trace \cite{KS}, \cite{Serbinowski}, \cite{Fuglede-reg}, \cite{Mese-Breiner-regular}.

\section{First variation of energy} \label{sec:fuglede}

For Sobolev maps in CAT($\kappa$) spaces we have the following analog of the classical first variation formula.

\begin{lem}\label{lem:varinequ}
Let $X$ be a CAT($\kappa$) space and $f$ a  Lipschitz continuous $\lambda$-convex function on $X$ with gradient flow
$\Phi:[0,\infty)\times X\rightarrow X$.

Let $u\in W^{1,2} (\Omega, X)$ be given.
 For any Lipschitz continuous test function $\rho: \Omega \to[0,\infty)$
with compact support in $\Omega$, define a variation $u_t$ of $u$ by
\[u_t(x)=\Phi( t\rho(x) ,u(x)).\]
Then $u_t \in W^{1,2} (\Omega, X)$ and  the following inequality holds
\begin{equation}\label{eq:var}
 \frac{d}{dt}^+\Bigr|_{t=0}E^2(u_t)\leq -2  \int\limits_{\Omega} \left(\lambda \cdot  e^2_u(x)\cdot \rho(x)  +
 \langle\nabla_x (f\circ u),\nabla_x\rho\rangle \right)\; dx
\end{equation}
where the left-hand side is the upper Dini derivative.
\end{lem}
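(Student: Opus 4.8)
The plan is to estimate the energy density of $u_t$ fiberwise with the velocity bound of Corollary \ref{cor: distesti}, then integrate that estimate first over directions $v\in S^{n-1}$ and then over $\Omega$ to obtain a smooth comparison function $g(t)$ with $E^2(u_t)\le g(t)$ for $t\ge 0$ and $g(0)=E^2(u_0)$; differentiating $g$ at $t=0$ then yields \eqref{eq:var}.

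First I would verify that $u_t\in W^{1,2}(\Omega,X)$ for every $t\ge 0$. Measurability and essential separable valuedness are immediate, and since the gradient flow $\Phi$ moves points with speed at most $\mathrm{Lip}(f)$ one has $d(x,u_t(z))\le d(x,u(z))+t\|\rho\|_\infty\mathrm{Lip}(f)$, so $u_t\in L^2(\Omega,X)$. For the Sobolev property I would restrict $u$ to a line $\ell$ parallel to a coordinate direction $e_i$: for almost every such line the curve $\gamma=u\circ\ell$ is absolutely continuous, so Corollary \ref{cor: distesti}, applied with this $\gamma$ and with $\rho_0(s)=t\,\rho(\ell(s))$, shows that $s\mapsto u_t(\ell(s))=\Phi_{\rho_0(s)}(\gamma(s))$ is absolutely continuous and, using $|\nabla^- f|\le\mathrm{Lip}(f)$, $|(f\circ\gamma)'|\le\mathrm{Lip}(f)\,|\gamma'|$ and $|\rho_0'|\le t\,\mathrm{Lip}(\rho)$, that
\[
|(u_t\circ\ell)'(s)|\le e^{|\lambda| t\|\rho\|_\infty}\bigl(m_u(\ell(s),e_i)+t\,\mathrm{Lip}(f)\mathrm{Lip}(\rho)\bigr)
\]
for almost every $s$. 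The right-hand side is an $L^2$ function on $\Omega$, so $u_t$ is absolutely continuous on almost every line with an $L^2$ upper gradient, hence Sobolev.

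Next, for a fixed direction $v\in S^{n-1}$ I would slice $\Omega$ by the lines parallel to $v$ and again apply Corollary \ref{cor: distesti} on almost every such line, using that the selected representative of $u$ restricts to an absolutely continuous curve of metric speed $m_u(\cdot,v)$, that $f\circ u\in W^{1,2}(\Omega)$ (because $f$ is Lipschitz) restricts with derivative $\langle\nabla(f\circ u),v\rangle$, and that $\rho$ restricts with derivative $\langle\nabla\rho,v\rangle$. By Fubini this gives, for almost every $(x,v)\in\Omega\times S^{n-1}$,
\[
m_{u_t}(x,v)^2\le e^{-2\lambda t\rho(x)}\Bigl(m_u(x,v)^2-2t\,\langle\nabla_x(f\circ u),v\rangle\langle\nabla_x\rho,v\rangle+t^2|\nabla^-_{u(x)}f|^2\langle\nabla_x\rho,v\rangle^2\Bigr).
\]
Dividing by $\omega_n$ and integrating over $v\in S^{n-1}$, the factor $e^{-2\lambda t\rho(x)}$ leaves the $v$-integral and the identity $\int_{S^{n-1}}\langle a,v\rangle\langle b,v\rangle\,dv=\omega_n\langle a,b\rangle$ gives, for almost every $x$,
\[
e^2_{u_t}(x)\le e^{-2\lambda t\rho(x)}\Bigl(e^2_u(x)-2t\,\langle\nabla_x(f\circ u),\nabla_x\rho\rangle+t^2|\nabla^-_{u(x)}f|^2|\nabla_x\rho|^2\Bigr)=:\phi(t,x).
\]

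Finally I would set $g(t)=\int_\Omega\phi(t,x)\,dx$, so that $E^2(u_t)\le g(t)$ for every $t\ge 0$, while $g(0)=\int_\Omega e^2_u\,dx=E^2(u)=E^2(u_0)$. Because $\rho$ has compact support, $e^2_u\in L^1$, $\langle\nabla(f\circ u),\nabla\rho\rangle\in L^1$, and $|\nabla^- f|\le\mathrm{Lip}(f)$, the $t$-derivative of $\phi(t,\cdot)$ is dominated in $L^1$ uniformly for $t$ in a bounded interval, so $g$ is differentiable at $0$ with
\[
g'(0)=\int_\Omega\partial_t\phi(0,x)\,dx=-2\int_\Omega\bigl(\lambda\,\rho(x)\,e^2_u(x)+\langle\nabla_x(f\circ u),\nabla_x\rho\rangle\bigr)\,dx,
\]
which is exactly the right-hand side of \eqref{eq:var}. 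From $E^2(u_t)\le g(t)$ and $E^2(u_0)=g(0)$ one gets $\frac{E^2(u_t)-E^2(u_0)}{t}\le\frac{g(t)-g(0)}{t}$ for $t>0$, and letting $t\to 0^+$ gives $\frac{d}{dt}^+\big|_{t=0}E^2(u_t)\le g'(0)$, as claimed. I expect the main obstacle to be the measure-theoretic bookkeeping — using Fubini to pass from ``for each direction $v$, almost every line'' to ``almost every $(x,v)\in\Omega\times S^{n-1}$'', and checking that the a.c.-on-almost-every-line representatives of $u$ and of $f\circ u$ are compatible so that the derivatives appearing in Corollary \ref{cor: distesti} really are the weak ones — since the entire geometric content is already encapsulated in that corollary.
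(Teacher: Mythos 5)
Your proof is correct and takes essentially the same route as the paper's: apply Corollary \ref{cor: distesti} along lines to bound $m_{u_t}(x,v)$ almost everywhere, average over $S^{n-1}$ using $\frac{1}{\omega_n}\int_{S^{n-1}}\langle w_1,v\rangle\langle w_2,v\rangle\,dv=\langle w_1,w_2\rangle$, and integrate over $\Omega$. The only cosmetic difference is that the paper linearizes the factor $e^{-2\lambda t\rho(x)}$ to $(1-2\lambda\rho(x)t)$ plus an error $Ct^{2}$ before integrating, while you keep the exponential and differentiate the integrated bound at $t=0$ by domination; both give \eqref{eq:var}.
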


\begin{proof}
	We fix $t>0$. As a composition of a Sobolev and a Lipschitz map, the map $u_t$ 	is contained in $W^{1,2} (\Omega, X)$.

	Consider  the curves $\gamma (s) = u(x+sv)$ and $\eta (s)=
	\Phi (t\cdot \rho (x+s v), \gamma (s))$.

	By definition, for almost all $x,v \in \Omega \times S^{n-1}$,  the velocities of $\gamma$ respectively $\eta$  at $s=0$ are exactly
	$m_u(x,v)$ and $m_{u_t} (x,v)$, the values of the corresponding approximate metric differentials.  Applying    Corollary \ref{cor: distesti} we get the estimate, valid at all such $x,v$:

	$$m_{u_t}^2  (x,v)  \leq e^{-2\lambda \cdot t\cdot \rho (x)}
	(m_u ^2(x,v) -    2t\left\langle\nabla _x (f\circ u) ,v\right\rangle  \cdot \left\langle\nabla _x \rho, v\right\rangle     + t^2 |\nabla _{u(x)}f |^2  \cdot \left\langle\nabla _x \rho ,v\right\rangle^2)$$
Averaging over $S^{n-1}$  and using  the equality
	$$\frac 1 {\omega _n} \int _{S^{n-1}}  \langle w_1,v \rangle \cdot \langle w_2, v \rangle \;dv =  \langle w_1,w_2 \rangle  \;,$$
	 valid for all $w_1,w_2\in \R^n$   ,  we obtain the estimate of the  energy densities,
valid pointwise almost everywhere on $\Omega$:
$$e^2_{u_t} (x) \leq 	e^{-2\lambda \cdot t\cdot \rho (x)}(e^2_u (x) -
	2t  \left\langle\nabla _x \rho , \nabla _x (f\circ u)\right\rangle +t^2 |\nabla _{u(x) }f|^2 \cdot |\nabla _x \rho|^2 ) \leq $$
$$\leq	(1-2\lambda \rho (x) \cdot t) (e^2_u(x) -	2t  \left\langle\nabla _x \rho , \nabla _x (f\circ u)\right\rangle   )  +Ct^2 \;,$$
for some constant $C$ depending on $\lambda$, $\rho$, $f$ and $u$.

 The claim follows now directly by integration over  $\Omega$.
\end{proof}

Now we can easily derive:
\begin{proof}[Proof of Theorem \ref{thm: fuglede}]

	Appyling Lemma \ref{lem:varinequ} and the definition of harmonicity, we see that the right hand side of (\ref{eq:var}) must be non-negative for
	any non-negative  Lipschitz continuous function $\rho$ with compact support in $\Omega$.
	Thus the distributional Laplacian $\Delta (f\circ u)$ satisfies
	$$\Delta (f\circ u) (\rho)  = - \int _{\Omega} \langle \nabla _x (f\circ u), \nabla _x \rho \rangle  \; dx  \geq \lambda \int _{\Omega}  e^2_u \cdot \rho \; .$$
	By the representation theorem of Riesz in distirbution theory, this is sufficient to draw the conclusion.
\end{proof}

\section{Preparations} \label{sec:prepar}
\subsection{Length spaces and their conformal changes}  \label{subsec: confchnage}
The length of a rectifiable curve $\gamma$ in a metric space $X$ is denoted by $\ell (\gamma)$.   A metric space $X$ is a \emph{length space} if the distance between any two points
is equal to the greatest lower bound for lengths of curves connecting the respective points.
A curve $c:[a,b]\to X$ will  be called {\em geodesic} if  it is an isometric embedding.
The space $X$ itself will be called {\em geodesic} if any two points in $X$
are joined by a geodesic.

We refer to \cite{LS} for more details on what follows here. Let $X$ be a length space and  $f:X\to (0,\infty)$ be a  continuous function.

We define the $f$-length of a rectifiable curve $\gamma :[a,b]\to X$  by
\begin{equation} \label{eq:length}
\ell_f(\gamma) = \int_a^b f(\gamma(t)) \cdot |\dot\gamma(t)| \,dt \,,
\end{equation}
where $|\dot\gamma(t)|$ denotes the velocity of the curve $\gamma$ at  time $t$.   The conformally changed metric $d_f$ on the space $X$ is defined by
\begin{equation} \label{eq:dist}
d_f (x,y)=  \inf _{\gamma} \left\{ \ell _f(\gamma) \; ; \;  \gamma \;  \text{Lipschitz curve from  } x \; \text{ to } \;  y\right\} \;.
\end{equation}
 The space   $f\sd X:=(X,d_f)$ is a length space called  the \emph{metric space conformally   equivalent to $X$ with conformal factor $f$}.

The identity map  $\id_f:X\to f\sd X$ is  a  locally bilipschitz homeomorphism.
If $f$  is bounded from below  by a positive constant  and $X$ is complete then $f\cdot X$ is complete as well.

We will need the following observation:

\begin{lem}  \label{lem: distfunc}
	Let $X$ be a length space and assume  $X=B_r(x)$ for some    $x\in X$ and $r>0$.
	Let $\xi :[0,r) \to (0,\infty)$ be   continuous  and consider the function
	$f(y):=\xi  (d(x,y))$ on $X$.  Then, in the conformally changed space $f\cdot X$, the distance function to  $x$ can be computed as:
	$$d_f (x,y)  = \int _0 ^{d(x,y)} \xi (t) \; dt \;.$$
	
\end{lem}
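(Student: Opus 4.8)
The plan is to prove the two inequalities separately. For the upper bound, I would exhibit a specific competitor curve: take any geodesic (or, if geodesics are not available in the length space $X$, an almost-geodesic) $\gamma$ from $x$ to $y$ in $X$, parametrized so that $d(x,\gamma(t))=t$ for $t\in[0,d(x,y)]$. Along such a curve the triangle inequality forces $|d(x,\gamma(t))-d(x,\gamma(s))|\le d(\gamma(t),\gamma(s))\le |t-s|$ for a unit-speed parametrization, so in fact $d(x,\gamma(t))=t$ exactly when $\gamma$ is a unit-speed geodesic. Then $f(\gamma(t))=\xi(d(x,\gamma(t)))=\xi(t)$ and $|\dot\gamma(t)|=1$, so by \eqref{eq:length} we get $\ell_f(\gamma)=\int_0^{d(x,y)}\xi(t)\,dt$, whence $d_f(x,y)\le\int_0^{d(x,y)}\xi(t)\,dt$. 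If $X$ is only a length space, I would instead take, for each $\epsilon>0$, a curve $\gamma_\epsilon$ from $x$ to $y$ with $\ell(\gamma_\epsilon)<d(x,y)+\epsilon$, reparametrize by arclength, and estimate $f(\gamma_\epsilon(t))\le\sup\{\xi(s):|s-t|\le\epsilon\}$ using that $d(x,\gamma_\epsilon(t))\le t$ and $d(x,\gamma_\epsilon(t))\ge d(x,y)-\ell(\gamma_\epsilon)+t> t-\epsilon$; letting $\epsilon\to0$ and invoking uniform continuity of $\xi$ on compact subintervals of $[0,r)$ recovers the bound.

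For the lower bound, I would show that \emph{every} Lipschitz curve $\gamma:[a,b]\to X$ from $x$ to $y$ satisfies $\ell_f(\gamma)\ge\int_0^{d(x,y)}\xi(t)\,dt$. The key observation is that the function $\rho(t):=d(x,\gamma(t))$ is Lipschitz with $|\rho'(t)|\le|\dot\gamma(t)|$ almost everywhere (again by the triangle inequality), and $\rho(a)=0$, $\rho(b)=d(x,y)$. Hence, writing $g(s):=\int_0^s\xi(t)\,dt$ for the antiderivative of $\xi$ (well-defined and $C^1$ on $[0,r)$ since $\xi$ is continuous), the chain rule for Lipschitz functions gives $\frac{d}{dt}\,g(\rho(t))=\xi(\rho(t))\cdot\rho'(t)$ a.e., so
\[
\ell_f(\gamma)=\int_a^b\xi(\rho(t))\,|\dot\gamma(t)|\,dt\ge\int_a^b\xi(\rho(t))\,|\rho'(t)|\,dt\ge\int_a^b\frac{d}{dt}\,g(\rho(t))\,dt=g(\rho(b))-g(\rho(a))=\int_0^{d(x,y)}\xi(t)\,dt,
\]
using $\xi>0$ so that $\xi(\rho(t))|\rho'(t)|\ge\xi(\rho(t))\rho'(t)$. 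Taking the infimum over all such $\gamma$ yields $d_f(x,y)\ge\int_0^{d(x,y)}\xi(t)\,dt$, and combined with the upper bound this proves the claim.

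The only mildly delicate point is making sure all the curves involved actually stay inside $X=B_r(x)$, so that $\xi$ is evaluated only on $[0,r)$ and the integral $\int_0^{d(x,y)}\xi(t)\,dt$ makes sense — this is automatic since $\rho(t)<r$ for every point $\gamma(t)\in X$ — together with the a.e.\ differentiability facts for Lipschitz (or absolutely continuous) curves, which are standard. I do not expect a genuine obstacle here; the argument is essentially the one-dimensional observation that conformal length in a ``radial'' metric is governed entirely by the radial coordinate, and the CAT($\kappa$) hypothesis plays no role.
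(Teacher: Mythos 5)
Your argument is correct, and it takes a somewhat different route from the paper. The paper's proof squeezes $d_f(x,y)$ between Riemann sums: it estimates the $d_f$-distance between concentric metric spheres $S_{s_0}(x)$ and $S_{s_1}(x)$ from below and above by $|s_1-s_0|$ times the minimum, respectively maximum, of $\xi$ on $[s_0,s_1]$, and then passes to the limit of partitions. You instead prove the two inequalities directly: the upper bound by integrating $\ell_f$ along an explicit almost-geodesic (correctly handling the absence of geodesics in a mere length space, and correctly pinning $d(x,\gamma_\epsilon(t))$ between $t-\epsilon$ and $t$), and the lower bound by observing that for any Lipschitz competitor the radial coordinate $\rho(t)=d(x,\gamma(t))$ is Lipschitz with $|\rho'|\le|\dot\gamma|$ a.e., so that $\ell_f(\gamma)\ge\int (g\circ\rho)' = g(d(x,y))$ with $g$ the antiderivative of $\xi$, by the chain rule and absolute continuity of $g\circ\rho$. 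Your lower-bound argument is essentially the infinitesimal version of the paper's band-crossing estimate, but it is stated more precisely (the paper leaves the ``curve must cross each annulus'' step implicit in its one-line Riemann-sum remark), while the paper's squeeze avoids any differentiation and treats both bounds symmetrically. Your attention to the points staying in $B_r(x)$, so that $\xi$ is only evaluated on a compact subinterval of $[0,r)$ where it is uniformly continuous and bounded, is exactly the right care to take; there is no gap.
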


\begin{proof}
	
	Consider concentric metric spheres $S_s(x) $ of radii $s \leq d(x,y)$ around $x$.  Then for any
	$0 \leq  s_0<s_1 \leq d(x,y) $  and any point $  z\in S_{s_1}(x)$ we can estimate the $d_f$-distance from $z$ to $S_s(X)$  as
	$$|s_1-s_0| \cdot \min _{s_0\leq s \leq s_1}  \xi (s)  \leq       d_f (z, S_s(x)) \leq |s_1-s_0| \cdot \max _{s_0\leq s \leq s_1}  \xi (s) \,.$$
	The proof of the lemma follows by writing the integral $\int _0 ^{d(x,y)} \xi (t) \; dt $ as a limit of Riemann sums as on the right and left hand sides of the above inequality.
	
\end{proof}

\subsection{Local-to-global in CAT($\ka$) spaces}\label{subsec: facts}
The basic local-to-global theorem about CAT($\ka$) spaces is the theorem of Cartan--Hadamard, saying that a complete length space, which is locally CAT($\ka$) with $\kappa \leq 0$, is a CAT($\ka$) space if and only if it is simply connected.

In order to describe  related local-to-global statements for all
$\kappa$, we recall that the injectivity radius  of a local CAT($\ka$) space $X$
 is the supremum of all $r>0$, such that any pair of points $x$ and $y$ in $X$ at distance less than $r$ are connected by a unique geodesic and that this geodesic depends continuously on the endpoints.

 Combining \cite[6.10]{Ballmann} and  \cite[8.11.3]{AKP} we obtain the following.
\begin{lem}  \label{lem:loc-glob}
	Let $X$ be a complete length space which is locally CAT($\ka$).
Let $r>0$ be such that $r<\frac {\pi} {2\sqrt \kappa}$ for $\kappa >0$.

	If the injectivity radius of $X$ is larger than $r$  then  any  ball $\bar B_{ r } (x)$ is a  convex CAT($\ka$) subset of $X$.
\end{lem}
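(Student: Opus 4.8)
The plan is to deduce the statement from two known results: the generalized Cartan--Hadamard theorem of Ballmann (\cite[6.10]{Ballmann}) and the gluing/convexity package for CAT($\kappa$) spaces recorded in \cite[8.11.3]{AKP}. First I would observe that the hypothesis on the injectivity radius being larger than $r$ (together with $r<\frac{\pi}{2\sqrt\kappa}$ when $\kappa>0$) guarantees that between any two points of $\bar B_r(x)$ at distance less than $r$ there is a unique geodesic depending continuously on its endpoints, and in particular that $\bar B_r(x)$ is itself a geodesic space with geodesics varying continuously. The point is then to upgrade this local comparison information to the global CAT($\kappa$) inequality on the ball.

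The key step is to apply the local-to-global principle in the form stated in \cite[6.10]{Ballmann} (for $\kappa\le 0$ this is Cartan--Hadamard; for $\kappa>0$ one uses that $r$ is below the relevant threshold $\frac{\pi}{2\sqrt\kappa}$, which is exactly the radius restriction forced by the diameter of the model space $M_\kappa^2$). A closed ball of radius $r$ in a space of injectivity radius $>r$ is contractible along geodesics to its center, hence simply connected, and it is locally CAT($\kappa$) because $X$ is. Therefore Ballmann's theorem yields that $\bar B_r(x)$, with its induced length metric, satisfies the global CAT($\kappa$) comparison. Next I would invoke \cite[8.11.3]{AKP} to promote this to convexity of $\bar B_r(x)$ as a subset of $X$: since the induced length metric on $\bar B_r(x)$ coincides with the restriction of $d$ (because, by the injectivity radius bound, geodesics between points of the ball at distance $<r$ are contained in it and realize the distance), the ball is geodesically convex in $X$, and the CAT($\kappa$) property it enjoys intrinsically is then also the comparison property relative to $X$.

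The main obstacle I anticipate is the bookkeeping needed to make sure the intrinsic and extrinsic metrics on $\bar B_r(x)$ agree and that the closed ball, rather than an open one, is the correct object to feed into the cited theorems: one must check that a geodesic between two points of the closed ball of radius $r$ cannot leave the ball, which uses the uniqueness/continuity of geodesics up to length $r$ and a convexity-of-distance argument on the comparison triangle. Once this is in place, the application of \cite[6.10]{Ballmann} and \cite[8.11.3]{AKP} is essentially formal, and the precise statement of the lemma follows.
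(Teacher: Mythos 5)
Your proposal follows the paper's route exactly: the paper obtains the lemma by simply combining \cite[6.10]{Ballmann} with \cite[8.11.3]{AKP}, with no further written argument, and you invoke the same two results in the same roles (a generalized Cartan--Hadamard statement for the CAT($\kappa$) comparison, the AKP statement for convexity of the ball). The intrinsic-versus-restricted-metric bookkeeping you flag is precisely the detail the paper delegates to the cited references, so your sketch is, if anything, more explicit than the paper's own proof.
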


 The next local-to-global result is also well-known.
\begin{lem}\label{lem:shorthom}
	Let $X$ be a complete length space which is locally CAT($\ka$).
		Let $\Lambda >0$ be such that $\Lambda \leq \frac {\pi} {\sqrt \kappa}$ if $\kappa >0$.

	Assume that for any closed curve $\Gamma$ of length less than $2\Lambda$
	there exists a homotopy $\Gamma _t , t\in [0,1]$  from
	$\Gamma =\Gamma _0$ to a constant curve $\Gamma _1$ such that  the length of $\Gamma _t$ is less than $2\Lambda $ for all $t$.

	Then any closed  ball of radius  less than $\frac{\Lambda}{2}$ in $X$ is convex and CAT($\ka$).
\end{lem}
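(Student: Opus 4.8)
The plan is to derive the statement from Lemma~\ref{lem:loc-glob}: I would show that the injectivity radius of $X$ is at least $\Lambda$, and then observe that every $\rho<\Lambda/2$ satisfies $\rho<\Lambda\le\operatorname{injrad}(X)$ and, since $\Lambda\le\pi/\sqrt\kappa$ when $\kappa>0$, also $\rho<\pi/(2\sqrt\kappa)$, so Lemma~\ref{lem:loc-glob} yields that $\bar B_\rho(x)$ is a convex CAT($\kappa$) subset of $X$. Everything therefore reduces to proving that any two points $p,q\in X$ with $d(p,q)<\Lambda$ are joined by a unique geodesic, depending continuously on the endpoints.

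First I would make the local structure uniform on compacta: the ``CAT($\kappa$) radius'' $r(z):=\sup\{\,s\le\pi/(2\sqrt\kappa):\bar B_s(z)\text{ is convex and CAT}(\kappa)\,\}$ is positive everywhere and $1$-Lipschitz in $z$ --- because a ball of radius $<\pi/(2\sqrt\kappa)$ contained in a convex CAT($\kappa$) ball is again convex and CAT($\kappa$) --- hence it is bounded below by a positive constant on every compact subset of $X$.

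The core of the argument is a developing map into the model surface $M^2_\kappa$ of curvature $\kappa$, in the spirit of Alexandrov's globalization. Assume $\gamma_0,\gamma_1$ are geodesics from $p$ to $q$ with $d(p,q)<\Lambda$. Their concatenation $\Gamma=\gamma_0*\bar\gamma_1$ is a closed curve of length $<2\Lambda$, so the hypothesis furnishes (after collapsing the constant end of the homotopy) a map $u\colon\bar D\to X$ with $u|_{S^1}=\Gamma$ and with all concentric circles of length $<2\Lambda$. Subdivide $\bar D$ into cells so fine that $u$ sends each into a convex CAT($\kappa$) ball --- possible since $u(\bar D)$ is compact and $r(\cdot)$ has a positive lower bound there --- choose comparison maps of these cells into $M^2_\kappa$, and glue: on overlaps two such maps differ by an isometry of $M^2_\kappa$, so the transition data are locally constant and, $\bar D$ being simply connected, assemble into a development $\tilde u\colon\bar D\to M^2_\kappa$ that is non-expanding and arclength-preserving along $\gamma_0$ and $\gamma_1$. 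Then $\tilde u\circ\gamma_0$ and $\tilde u\circ\gamma_1$ are geodesics of $M^2_\kappa$ of equal length $<\Lambda$ with the common endpoints $\tilde u(p),\tilde u(q)$; the bound $<2\Lambda\le 2\pi/\sqrt\kappa$ on the intermediate curves keeps the development from wrapping and forces $d_{M^2_\kappa}(\tilde u(p),\tilde u(q))<\pi/\sqrt\kappa$, a range in which $M^2_\kappa$ is uniquely geodesic, so $\tilde u\circ\gamma_0=\tilde u\circ\gamma_1$. Since $\gamma_0$ and $\gamma_1$ issue from $p$ with the directions corresponding under one and the same comparison map near $p$, and geodesics in a CAT($\kappa$) space do not branch, it follows that $\gamma_0=\gamma_1$. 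Existence of a geodesic from $p$ to $q$ is obtained similarly, by applying the developing procedure to a short path from $p$ to $q$ and using the uniformly controlled developed picture to run and pass to the limit in a curve-shortening; continuity in the endpoints follows because geodesics between converging endpoints have converging lengths and develop, through a fixed development over a compact neighbourhood, to the geodesic between the limits.

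The main obstacle is making the development rigorous. One has to check (i) the uniform lower bound for $r(\cdot)$ on compacta used above; (ii) that the locally defined comparison maps into $M^2_\kappa$ glue coherently, i.e.\ that their transition isometries form a cocycle which, being locally constant, is trivial over the simply connected disc $\bar D$; and (iii) that the length constraint $<2\Lambda\le 2\pi/\sqrt\kappa$ really prevents the development from degenerating or wrapping around $M^2_\kappa$, so that uniqueness of geodesics in $M^2_\kappa$ and non-branching of geodesics in $X$ can be applied as above. All of this is standard machinery of the globalization theory; see \cite{AKP} and \cite{Ballmann}.
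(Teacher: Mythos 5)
Your reduction to Lemma~\ref{lem:loc-glob} is exactly the paper's strategy: one only needs a lower bound on the injectivity radius (the paper gets $\Lambda/2$, which suffices for balls of radius $<\Lambda/2$). The difference is how that bound is obtained. The paper does not develop anything: it invokes the majorization theorem \cite[8.13.4]{AKP}, which says precisely that under the hypothesis (every closed curve of length $<2\Lambda$ admits a null-homotopy through curves of length $<2\Lambda$, $2\Lambda\leq 2\pi/\sqrt{\kappa}$) every such curve in a locally CAT($\kappa$) space is majorized by a CAT($\kappa$) space; uniqueness and continuity of short geodesics, hence the injectivity radius bound, then follow as in Proposition~\ref{prop:major}, and Lemma~\ref{lem:loc-glob} finishes. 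You instead try to re-prove this globalization input from scratch, and this is where your argument breaks.

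The core step of your construction is false as stated: you subdivide $\bar D$ into cells mapped by $u$ into small convex CAT($\kappa$) balls and then ``choose comparison maps of these cells into $M^2_\kappa$'' whose transition maps on overlaps are isometries of $M^2_\kappa$. A small convex CAT($\kappa$) ball in a general CAT($\kappa$) space is not isometric (not even locally bi-Lipschitz with controlled distortion) to a subset of the model surface --- think of a neighbourhood of a branch point in a tree or in a Euclidean building, both of which are allowed here. So there is no distance-preserving chart at all, the ``comparison maps'' are not canonically defined, and two overlapping ones certainly do not differ by an isometry of $M^2_\kappa$; the locally constant cocycle you want to trivialize over $\bar D$ does not exist. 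A developing map of this kind is available only for spaces locally isometric to $M^2_\kappa$ (or for Alexandrov's development of a single curve), not for maps of a disc into a general CAT($\kappa$) space. What actually replaces it is the construction of a majorizing CAT($\kappa$) disc by Reshetnyak majorization of small triangles glued via Alexandrov's patchwork and Reshetnyak's gluing theorem --- i.e.\ exactly the content of \cite[8.13.4]{AKP} that the paper cites. Your non-expanding map that is arclength-preserving on $\gamma_0\ast\bar\gamma_1$ is such a majorization, but its target must be allowed to be an abstract CAT($\kappa$) disc rather than $M^2_\kappa$, and with that correction the subsequent ``$\tilde u\circ\gamma_0=\tilde u\circ\gamma_1$ forces $\gamma_0=\gamma_1$'' step also needs the thinness/uniqueness argument of Proposition~\ref{prop:major} rather than uniqueness of geodesics in the model surface. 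So the skeleton (injectivity radius $+$ Lemma~\ref{lem:loc-glob}) is right, but the central step (ii) of your plan rests on a wrong premise and cannot be repaired in the form you propose; you should route it through majorization instead.
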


\begin{proof}
 Let $\Ga$ be a closed curve of length less than $2\La$. Then our assumptions allow to apply \cite[8.13.4]{AKP}, to conclude that $\Ga$ is majorized by a CAT($\ka$) space.
 It follows that the injectivity radius of $X$ is at least $\frac{\Lambda}{2}$. Hence Lemma \ref{lem:loc-glob} completes the proof.
\end{proof}

Corollary \ref{cor:global} below will
 slightly strengthen the next result.

\begin{prop} \label{prop:global}
	Let $Z$ be a compact geodesic space homeomorphic to $\bar D$.
	If $Z$ is locally CAT(1) and has Hausdorff area  $\mathcal H^2 (Z)$  less than $2\pi$, then $Z$ is CAT(1).
\end{prop}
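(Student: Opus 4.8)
\textbf{Proof plan for Proposition \ref{prop:global}.}
The strategy is to verify the hypotheses of Lemma \ref{lem:shorthom} with $\kappa = 1$ and $\Lambda = \pi$, so that every closed ball of radius less than $\frac{\pi}{2}$ in $Z$ is convex and CAT(1); since $Z$ is compact and every point lies in such a ball, this yields that $Z$ is CAT(1). Thus the entire problem reduces to the following: given a closed curve $\Gamma$ in $Z$ of length less than $2\pi$, one must produce a length-non-increasing-enough homotopy $\Gamma_t$ to a point with every $\Gamma_t$ of length less than $2\pi$. The natural device here is the theory of minimal discs: since $Z$ is homeomorphic to $\bar D$, the curve $\Gamma$ bounds, and one can fill it by a disc of least area among all Sobolev discs with the prescribed boundary, using the solution of the Plateau problem in metric spaces (as in \cite{LWplateau}). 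The area bound $\mathcal H^2(Z) < 2\pi$ is exactly what one needs to control this filling: the minimal disc has area at most $\mathcal H^2(Z) < 2\pi$.

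The heart of the argument is an isoperimetric-type statement: a minimal disc of area strictly less than $2\pi$ in a locally CAT(1) space is itself CAT(1) (one expects this to be recorded as \cite{LS} or a companion result — indeed Corollary \ref{cor:global} is announced to strengthen exactly this proposition), and on such a disc the radial structure from its center gives a concentric foliation by curves whose lengths can be controlled by the coarea formula together with the sub-$2\pi$ area bound. Concretely, I would parametrize the minimal disc by $v:\bar D \to Z$, let $v$ restrict to circles of radius $s \in [0,1]$, and use that the integral of the squared lengths of these circle-images is controlled by the energy, which for a minimal (hence conformal, up to the metric-space subtleties) disc is comparable to the area, hence less than $2\pi$ up to the relevant constant. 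From this one extracts, for a dense set of radii, circles of length less than $2\pi$; sliding the boundary inward along these gives the desired homotopy through short loops, and contracting the innermost small loop to the center point finishes it. One must be slightly careful that the filling disc $v$ need not be injective and the "radial homotopy" lives in $Z$ via $v$, not in $\bar D$ itself, but since we only need a homotopy of curves (not an isotopy), pushing $\Gamma$ along the images $v(\{|z| = s\})$ is legitimate.

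The main obstacle I anticipate is the passage from "area less than $2\pi$" to "there exist short concentric loops realizing a length-controlled homotopy": this is where one genuinely needs the regularity theory for minimal discs in CAT(1) spaces — conformality of the energy-minimizing parametrization, the coarea/length estimates relating $\int_0^1 \ell(v|_{|z|=s})\, ds$ (or its square) to $E^2(v)$ and thence to the area, and the fact that these loops can be taken to vary continuously in $s$ so as to assemble an honest homotopy. A secondary technical point is handling the case where $\Gamma$ already has length close to $2\pi$: one wants the filling disc's boundary loop and all intermediate loops to stay strictly below $2\pi$, which should follow from strict inequality $\mathcal H^2(Z) < 2\pi$ propagating to a strict area bound on the minimal disc, but the bookkeeping of constants (the factor relating energy to area, and the $\frac{1}{2\pi}\int_{S^1}$ normalization in the definition of energy density) needs to be done carefully. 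Once Lemma \ref{lem:shorthom} applies, the conclusion is immediate.
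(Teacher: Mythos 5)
Your plan has the logical dependency of the paper reversed, which makes it circular. The tool you call the ``heart of the argument''---that a minimal disc of area less than $2\pi$ in a locally CAT(1) space is intrinsically CAT(1)---is, in this paper, the content of Corollary \ref{cor:global} and Theorem \ref{thm:plateau}(4), and both are \emph{deduced from} Proposition \ref{prop:global}; you cannot invoke them to prove it. (Corollary \ref{cor:global} ``strengthens'' the proposition in the sense that its statement is stronger, but its proof reduces to Proposition \ref{prop:global}.) Moreover, Lemma \ref{lem:PP} and Theorem \ref{thm:plateau} are formulated for globally CAT($\kappa$) ambient spaces, while $Z$ here is only locally CAT(1), so even the existence/regularity package you want to run inside $Z$ is not available off the shelf.

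Even granting a minimal filling, the step that actually has to produce the homotopy is not justified and, as sketched, fails. Conformality plus the area bound only controls an integral like $\int_0^1 \ell\bigl(v|_{|z|=s}\bigr)^2\, \frac{ds}{s}$; this gives circles of length $<2\pi$ for \emph{some} (even many) radii, but a homotopy obtained by sliding along concentric circles must pass through \emph{every} radius, and a dense set of short radii does not assemble into a homotopy through curves of length $<2\pi$. In general, small area plus short boundary does not force a contraction through short curves---the missing ingredient is precisely a curvature bound on the filling, which is the circular point above. Your globalization is also off: Lemma \ref{lem:shorthom} with $\Lambda=\pi$ only yields that balls of radius $<\pi/2$ are convex and CAT(1), and ``compact and covered by CAT(1) balls'' does not imply CAT(1) (a round projective plane is locally CAT(1) and covered by such balls); one would instead have to pass through majorization and the last clause of Proposition \ref{prop:major}. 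The paper's proof is entirely different and bypasses all of this: if $Z$ were not CAT(1), it would contain an isometrically embedded circle of length $2l<2\pi$ (Ballmann); the Jordan subdomain it bounds in $Z\cong\bar D$ is convex, locally CAT(1), and has area $<2\pi$, and doubling it along that circle via Reshetnyak's gluing theorem yields a locally CAT(1) sphere of area $<4\pi$, contradicting the Gauss--Bonnet formula.
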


\begin{proof}
	Otherwise $Z$  contains  an isometric embedding $\Gamma$ of a round  circle $S^1_{2l}$ of length $2l <2\pi$ in $Z$, \cite[6.9]{Ballmann}.  The closed Jordan domain $Z_1$ cut out of $Z$ by $\Gamma$ is convex, hence locally CAT(1) and its Hausdorff area is also less than $2\pi$.

	Doubling $Z_1$, thus gluing two copies of it along $\Gamma$,  we obtain
	a  space homeomorphic to the $2$-dimensional sphere, which has Hausdorff area less than $4\pi$ and which is  locally CAT(1), by Reshetnyak's gluing theorem, \cite[Theorem 8.9.1]{AKP}.

	But this contradicts the Gauss-Bonnet formula, \cite[(8.15)]{Reshetnyak-GeomIV}.
\end{proof}

As a consequence we deduce the following analog of \cite[Proposition 12.1]{LWcurv} for $\kappa \neq 0$.

\begin{cor}\label{cor:global}
	Let $\hat Z$ be a length space homeomorphic to the open disc $D$. For $\kappa \in \R$ let   $\hat Z$ be locally CAT($\ka$) and assume that
	for $\kappa >0$ the area $\mathcal H^2(\hat Z)$   is  at most $\frac {2\pi }{\kappa}$. Then the completion $Z$ of $\hat Z$  is CAT($\ka$).
\end{cor}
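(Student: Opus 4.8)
\textbf{Proof plan for Corollary \ref{cor:global}.}

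The plan is to reduce the general $\kappa$ statement to Proposition \ref{prop:global} by a two-step reduction: first a rescaling that normalizes $\kappa$ to one of the three model values, and then a local-to-global argument that produces convex CAT($\kappa$) balls exhausting $Z$. The case $\kappa = 0$ is exactly \cite[Proposition 12.1]{LWcurv}, which we may cite, so assume $\kappa \neq 0$. By rescaling the metric on $\hat Z$ by the factor $\sqrt{|\kappa|}$ we may assume $\kappa \in \{1,-1\}$; note that the area hypothesis $\mathcal H^2(\hat Z) \le \frac{2\pi}{\kappa}$ becomes $\mathcal H^2(\hat Z) \le 2\pi$ in the case $\kappa = 1$, and there is no area hypothesis when $\kappa = -1$.

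First I would treat $\kappa = -1$. Here one wants to apply the Cartan--Hadamard theorem as recalled in Subsection \ref{subsec: facts}: the completion $Z$ is a complete length space, it is locally CAT($-1$) (the completion of a locally CAT($\kappa$) space homeomorphic to $D$ is still locally CAT($\kappa$) — a point of the boundary circle has a neighborhood that is a convex subset of a small CAT($\kappa$) ball in $\hat Z$, hence CAT($-1$)), and it is simply connected since it is homeomorphic to $\bar D$. Hence $Z$ is CAT($-1$). For $\kappa = 1$ one cannot invoke Cartan--Hadamard; instead I would use Lemma \ref{lem:shorthom}. The strategy is to exhibit, for every closed curve $\Gamma$ in $Z$ of length less than $2\Lambda$ with $\Lambda$ slightly less than $\pi$, a length-nonincreasing-in-the-relevant-sense homotopy to a constant through curves of length less than $2\Lambda$. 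Because $Z$ is homeomorphic to $\bar D$ and hence simply connected, $\Gamma$ bounds; the issue is controlling lengths during the contraction. Here is where the area bound enters: following the argument of Proposition \ref{prop:global}, if some ball were not CAT($1$) then $Z$ would contain an isometrically embedded circle of length $2l < 2\pi$, and doubling the convex Jordan domain it bounds along this circle produces a locally CAT($1$) metric sphere of area less than $4\pi$, contradicting Gauss--Bonnet. More precisely, I would argue as follows: let $r_0$ be the supremum of radii $r < \frac{\pi}{2}$ such that every closed ball of radius $r$ in $Z$ is convex and CAT($1$); using Lemma \ref{lem:loc-glob} it suffices to bound the injectivity radius from below. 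Suppose for contradiction the injectivity radius is some value $\rho < \frac{\pi}{2}$. Then by a limiting/compactness argument (or directly by \cite[6.9]{Ballmann} applied to a suitable subspace) $Z$ contains a closed geodesic or a geodesic bigon of length $2\rho < \pi$; passing to the convex Jordan subdomain it cuts off — which is locally CAT($1$), homeomorphic to $\bar D$, and of area at most $\mathcal H^2(Z) \le 2\pi$ — the doubling argument of Proposition \ref{prop:global} applies verbatim and yields the Gauss--Bonnet contradiction.

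The main obstacle, and the step requiring the most care, is the $\kappa = 1$ case: one must verify that the hypotheses of Proposition \ref{prop:global} (or of the doubling construction inside its proof) are genuinely available at the scale of the obstruction to convexity, i.e.\ that the "bad" configuration really is a closed curve bounding a \emph{convex} subdomain whose area is controlled by $\mathcal H^2(Z)$, and that $\mathcal H^2(Z) = \mathcal H^2(\hat Z)$ (the boundary circle is $\mathcal H^2$-null). One also needs that $Z$ is a \emph{geodesic} space, not merely a length space, before Proposition \ref{prop:global} or \cite[6.9]{Ballmann} can be invoked: this follows because $Z$ is a complete, locally compact length space — local compactness comes from the local CAT($1$) structure, which gives locally convex, hence locally compact, neighborhoods — so Hopf--Rinow applies. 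Once geodesicity and the area identity are in hand, the doubling-plus-Gauss--Bonnet contradiction runs exactly as in Proposition \ref{prop:global}, and Lemma \ref{lem:loc-glob} (respectively the Cartan--Hadamard statement) upgrades the resulting injectivity radius bound to the global CAT($\kappa$) conclusion.
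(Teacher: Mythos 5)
Your reduction by rescaling and your use of Proposition \ref{prop:global} via a doubling/Gauss--Bonnet contradiction are in the spirit of the paper, but the core of your argument has a genuine gap: you apply Cartan--Hadamard (for $\kappa=-1$) and Lemma \ref{lem:loc-glob}/Lemma \ref{lem:shorthom} (for $\kappa=1$) \emph{directly to the completion} $Z$, and to do so you assert that $Z$ is homeomorphic to $\bar D$, locally CAT($\kappa$) at the added points, geodesic and locally compact. None of this is part of the hypotheses, and establishing it is essentially the content of the corollary. The completion of a length space homeomorphic to $D$ need not be homeomorphic to $\bar D$ (already $\hat Z=\mathbb H^2$ gives $Z\cong D$, and in general the completion boundary need not be a circle), so "simply connected since homeomorphic to $\bar D$" rests on a false premise. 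Your argument that an added point has a CAT($\kappa$) neighborhood fails because $\hat Z$ is only \emph{locally} CAT($\kappa$): the radii of convex CAT($\kappa$) balls around points of a Cauchy sequence may shrink to zero as the sequence approaches the new point, so no neighborhood of that point in $Z$ need be contained in (the closure of) one such ball. Likewise "locally convex, hence locally compact" is false (Hilbert space is CAT(0)), and extracting a closed geodesic or bigon realizing a small injectivity radius needs properness/compactness that you have not secured for $Z$. The paper sidesteps all of these issues by never arguing on $Z$ directly: it exhausts $\hat Z$ by \emph{compact} discs $Z_n\subset\hat Z$ bounded by geodesic polygons, shows these are locally CAT($\kappa$) in their intrinsic metrics, proves each $Z_n$ is globally CAT($\kappa$) (Cartan--Hadamard for $\kappa\le 0$, Proposition \ref{prop:global} for $\kappa=1$), and then passes to the completion by a limiting argument as in \cite[Proposition 12.1]{LWcurv}; that limiting step is exactly the missing idea in your plan.

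There is also a quantitative gap in your $\kappa=1$ case: the hypothesis only gives $\mathcal H^2(\hat Z)\le 2\pi$, while the doubling argument of Proposition \ref{prop:global} needs area \emph{strictly} less than $2\pi$ (area $\le 2\pi$ doubles to $\le 4\pi$, which is consistent with Gauss--Bonnet, e.g.\ for the round sphere), so your claim that the doubled sphere has area less than $4\pi$ does not follow. In the paper the strict inequality is automatic for each $Z_n$, because its complement in $\hat Z$ is open and non-empty and hence has positive area by \cite[Theorem 1.2]{LN1}; if you insist on arguing with a convex Jordan subdomain of $Z$ itself, you must make this strictness argument explicitly, on top of first establishing the structure of $Z$ discussed above.
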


\begin{proof}
	We exhaust the space $\hat Z$ by compact closed discs  $Z_n$ with boundary being a geodesic polygon as in the proof  of  \cite[Proposition 12.1]{LWcurv}.	As in   \cite[Section 11.2]{LWcurv}, we readily see that
	these subsets  $Z_n$ are locally CAT($\kappa$ ) in their intrinsic metrics.

	Moreover, a  limiting argument as in   \cite[Proposition 12.1]{LWcurv} shows that it  suffices to verify that  the closed discs  $Z_n$ are globally CAT($\ka$).

	For $\kappa \leq 0$, this statement follows directly by the theorem of Cartan--Hadamard.  For $\kappa >0$, we may rescale the space and assume $\kappa =1$.
	Any open non-empty subset of $Z_0$ has positive $\mathcal H^2$-area, \cite[Theorem 1.2]{LN1}, thus $\mathcal H^2(Z_n) <2\pi$, for any $n$.  The global CAT(1)  property of $Z_n$ is exactly
	 Proposition \ref{prop:global}.
\end{proof}

\subsection{Recognizing CAT($\ka$) spaces}
For us it will be important that CAT($\ka$) spaces can be recognized
without referring to geodesic triangles.
By a \emph{Jordan curve} in a metric space $X$ we denote a subset
homeomorphic to a circle.

We say that a metric space $Y$ majorizes a  rectifiable Jordan curve $\Gamma$ in a metric space $X$
if there exists a $1$-Lipschitz map $P:Y\to X$ which sends a Jordan curve $\Gamma'  \subset Y$ bijectively in an arc length preserving way onto $\Gamma$.
The following is proved in \cite[Proposition 2]{LS} for CAT(0) spaces.  Along the same lines we deduce:

\begin{prop}  \label{prop:major}
 Let $\kappa \in \R$ and $\Lambda >0$ be such that $\Lambda \leq \frac {\pi} {\sqrt k}$ if $\kappa >0$. Let $X$ be a complete length metric space.

 If any Jordan curve $\Gamma$ in $X$ of length $< 2\Lambda$
is majorized by some CAT($\ka$) space $Y_{\Gamma}$, then any closed  ball $B$  in $X$ of any  radius $r<\frac {\Lambda} 2$ is convex in $X$ and CAT($\kappa$) .

Moreover, if $\kappa >0$ and $\Lambda  =\frac {\pi} {\sqrt k}$ then $X$ is CAT($\ka$).
\end{prop}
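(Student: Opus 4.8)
The plan is to reduce the statement to the two local-to-global theorems already recorded, namely Lemma \ref{lem:shorthom} and Proposition \ref{prop:global}/Corollary \ref{cor:global}. The key point is that the majorization hypothesis is exactly what is needed to produce short homotopies of short closed curves, which is the hypothesis of Lemma \ref{lem:shorthom}. So the first step is: given a closed curve $\Gamma$ in $X$ of length $<2\Lambda$, after a small perturbation we may assume $\Gamma$ is a rectifiable Jordan curve (or a finite concatenation of such, handled by induction on the number of self-intersections, exactly as in \cite[Proposition 2]{LS}); by hypothesis it is majorized by a $1$-Lipschitz map $P\colon Y_\Gamma\to X$ from a CAT($\kappa$) space sending some Jordan curve $\Gamma'\subset Y_\Gamma$ arclength-preservingly onto $\Gamma$. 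Since $\length(\Gamma')=\length(\Gamma)<2\Lambda\le\frac{2\pi}{\sqrt\kappa}$ when $\kappa>0$, the curve $\Gamma'$ bounds a disc in $Y_\Gamma$ that can be contracted through curves of length $<2\Lambda$ (in a CAT($\kappa$) space this is standard: use geodesics from a point, or the convexity of balls of radius $<\frac{\pi}{2\sqrt\kappa}$, together with the fact that a closed curve of length $<\frac{2\pi}{\sqrt\kappa}$ lies in such a ball up to subdivision). Pushing this homotopy forward by the $1$-Lipschitz map $P$ gives a homotopy of $\Gamma$ in $X$ through curves of length $<2\Lambda$. Thus the hypothesis of Lemma \ref{lem:shorthom} is verified, and we conclude that every closed ball of radius $<\frac\Lambda2$ in $X$ is convex and CAT($\kappa$), which is the first assertion.

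For the second assertion, assume $\kappa>0$ and $\Lambda=\frac{\pi}{\sqrt\kappa}$. By the first part, $X$ is locally CAT($\kappa$) and geodesics of length $<\frac{\pi}{2\sqrt\kappa}$ are unique and vary continuously; in particular $X$ is a complete locally CAT($\kappa$) length space with injectivity radius at least $\frac{\pi}{2\sqrt\kappa}=\frac\Lambda2$. To upgrade from local to global we must control geodesic triangles of perimeter up to $\frac{2\pi}{\sqrt\kappa}=2\Lambda$. Here I would invoke \cite[8.13.4]{AKP} again, but now using the full strength of the majorization hypothesis for Jordan curves of length exactly up to $2\Lambda$: any such curve is majorized by a CAT($\kappa$) space, hence, by the characterization of CAT($\kappa$) via majorizations of triangles (Reshetnyak's majorization theorem, \cite[8.13.4]{AKP}), every geodesic triangle in $X$ of perimeter $<2\Lambda$ satisfies the CAT($\kappa$) comparison. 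Since a complete length space in which all triangles of perimeter $<\frac{2\pi}{\sqrt\kappa}$ satisfy comparison is CAT($\kappa$), this finishes the proof.

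I expect the main obstacle to be the passage from the majorization hypothesis (stated only for \emph{Jordan} curves) to statements about \emph{arbitrary} rectifiable closed curves and \emph{geodesic triangles}, since a geodesic triangle need not be embedded and a short closed curve may have many self-intersections. This is exactly the technical heart of \cite[Proposition 2]{LS}, and the resolution is the same: decompose a non-simple closed curve at its self-intersections into finitely many Jordan sub-loops, majorize and contract each separately, and reassemble the homotopies; an approximation argument passes from Lipschitz to merely rectifiable curves. A secondary subtlety is the borderline case $\length(\Gamma)=2\Lambda$ when $\kappa>0$, where one must be careful that the contraction stays strictly below length $2\Lambda$; this is handled by first shrinking $\Gamma$ slightly, or by noting that a closed curve of length $2\Lambda=\frac{2\pi}{\sqrt\kappa}$ that is \emph{not} a round circle already has a short contraction, while the round circle case is excluded by the majorization hypothesis (a CAT($\kappa$) space cannot contain an isometrically embedded round circle of length $\frac{2\pi}{\sqrt\kappa}$ that bounds, by the same Gauss--Bonnet obstruction used in Proposition \ref{prop:global}).
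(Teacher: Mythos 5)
There is a genuine gap in your first (and main) step: you verify only the hypothesis of Lemma \ref{lem:shorthom} that short closed curves admit short null-homotopies, and then invoke that lemma — but Lemma \ref{lem:shorthom} also assumes that $X$ is \emph{already locally CAT($\kappa$)}, which in Proposition \ref{prop:major} is not given; $X$ is merely a complete length space, and establishing the local curvature bound is precisely the content of the proposition. The short-homotopy condition by itself carries no curvature information: for instance, in a non-Euclidean normed plane every closed curve contracts through curves of non-increasing length (cone to a point), so your homotopy condition holds for every $\Lambda$, yet the space is not CAT($\kappa$) for any $\kappa$. In other words, by pushing the majorizing discs forward only to produce homotopies you discard exactly the comparison information that the majorization hypothesis is there to provide. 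The paper instead argues as in \cite[Proposition 2]{LS}: from the majorization of Jordan curves one first extracts existence, uniqueness and continuous dependence of geodesics between points at distance less than $\Lambda$, and the comparison inequality for geodesic triangles of perimeter less than $2\Lambda$ (a triangle, majorized by a CAT($\kappa$) space $Y_\Gamma$, has its sides lifted isometrically to geodesics of $Y_\Gamma$, and the comparison estimate is transported back along the $1$-Lipschitz map); this makes $X$ locally CAT($\kappa$) with injectivity radius at least $\Lambda$, and then Lemma \ref{lem:loc-glob} — not Lemma \ref{lem:shorthom} — yields the convex CAT($\kappa$) balls of radius $r<\frac{\Lambda}{2}$. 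For $\kappa>0$ and $\Lambda=\frac{\pi}{\sqrt\kappa}$ the comparison statement for all triangles of perimeter $<\frac{2\pi}{\sqrt\kappa}$, together with the geodesic existence just obtained, is the definition of CAT($\kappa$), with no further globalization needed.

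Your second paragraph actually contains the right ingredient (triangle comparison via majorization and \cite[8.12.4/8.13.4]{AKP}-type arguments), but it is placed in the wrong logical order: it leans on ``by the first part, $X$ is locally CAT($\kappa$)'', which is unsupported, whereas in a correct proof this comparison step must come \emph{first} and also feeds the first assertion. Two smaller points: your final appeal to ``a complete length space in which all triangles of perimeter $<\frac{2\pi}{\sqrt\kappa}$ satisfy comparison is CAT($\kappa$)'' needs the existence of geodesics between points at distance $<\frac{\pi}{\sqrt\kappa}$, which must itself be extracted from the majorization hypothesis (a complete length space need not be geodesic); and the borderline discussion about curves of length exactly $2\Lambda$ is moot, since both the hypothesis and everything one needs concern curves of length strictly less than $2\Lambda$. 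Your remark about reducing non-simple closed curves to Jordan sub-loops is correct and is indeed the technical point delegated to \cite[Proposition 2]{LS} in the paper.
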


\begin{proof}
 The argument in  \cite[Proposition 2]{LS} shows that any pair of points in
 $X$ at distance less than $\Lambda$ is connected by a unique geodesic in $X$. Moreover, such geodesics depend continuously on their endpoints.

  As in \cite[Proposition 2]{LS} the assumption implies that any triangle in $X$ of perimeter less than $2\Lambda$ is not thicker than its comparison triangle in the constant curvature surface.

  For $\kappa >0$ and $\Lambda  =\frac {\pi} {\sqrt k}$ this implies by definition, that  $X$ is CAT($\ka$).

  For general $\Lambda$, the condition implies that $X$ is locally CAT($\ka$) and the statement follows from Lemma \ref{lem:loc-glob}.
 
\end{proof}

\subsection{Surfaces}

In the case of flat domains the curvature of conformally changed metrics has been investigated
in detail by Yuri Reshetnyak, see \cite{Reshetnyak-GeomIV}  and the references therein.  In this case it is even possible
to relax the continuity and positivity assumptions on conformal factors.

 We say that a function $f:U\to [0,\infty)$
on a domain $U\subset \R^2$  is    $\kappa$-{\em log-subharmonic}, if
$f$  is upper semi-continuous, contained in $L^1_{loc}$ and satisfies
weakly
$$\Delta \log f+\frac{\ka}{2} f^2\geq 0 \;.$$
For a $\ka$-log-subharmonic function $f$ one can use  formulas \eqref{eq:length} and \eqref{eq:dist}
to define the conformally changed metric on $U$. Indeed, we have the following result due to Reshetnyak,
see Theorem 7.1.1 in \cite{Reshetnyak-GeomIV},  see  also \cite[Theorem 6.1]{Mese-curvature} and  \cite[Theorem 8.1, Section 17]{LWcurv}.

\begin{thm}\label{thm:resh}
 Let $U\subset\R^2$ be a domain and $f$ a $\ka$-log-subharmonic function on $U$.
 Then $f\sd U$ is locally CAT($\ka$)  and $\id_f:U\to f\sd U$
 is a homeomorphism.
\end{thm}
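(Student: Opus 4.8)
\textbf{Proof proposal for Theorem \ref{thm:resh}.}

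The plan is to reduce the statement to the three cases $\kappa=0$, $\kappa<0$, $\kappa>0$ and in each case pass from a smooth situation, where the classical Gauss curvature formula under a conformal change applies, to the general $\kappa$-log-subharmonic factor by an approximation and comparison argument. First I would record the local nature of the claim: both ``locally CAT($\kappa$)'' and ``$\id_f$ is a homeomorphism'' can be checked on small coordinate discs, so it suffices to work on a disc $D_\rho\subset U$ on which $f$ is bounded away from $0$ and $\infty$ after we have dealt with the zero set (see below). On such a disc, write $f=e^{w}$ with $w$ upper semicontinuous and $\Delta w + \tfrac{\kappa}{2}e^{2w}\ge 0$ weakly. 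When $w$ is smooth this inequality says precisely that the Gauss curvature $K=-e^{-2w}\Delta w$ of the Riemannian metric $e^{2w}(dx^2+dy^2)$ satisfies $K\le\kappa$, hence $f\sd U$ is a smooth surface of curvature $\le\kappa$, which is locally CAT($\kappa$) by the Riemannian comparison theorem; and $\id_f$ is trivially a homeomorphism in the smooth case.

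The second step is the approximation. Since $w$ is upper semicontinuous and locally integrable with $\Delta w \ge -\tfrac{\kappa}{2}e^{2w}$ a (locally finite signed) measure bounded below, I would split $w = u + v$ with $u$ subharmonic (a logarithmic potential of the negative part of the measure) and $v$ superharmonic, smooth away from a small exceptional set, and mollify: set $w_\varepsilon = w * \varphi_\varepsilon$ with $\varphi_\varepsilon$ a standard mollifier. Then $w_\varepsilon$ is smooth, $w_\varepsilon\to w$ in $L^1_{loc}$ and pointwise almost everywhere, and one checks, using Jensen's inequality for the convex function $e^{2(\cdot)}$ against the mollifier, that $w_\varepsilon$ satisfies the ``wrong-sign'' defect $\Delta w_\varepsilon + \tfrac{\kappa}{2}e^{2w_\varepsilon} \ge -\delta_\varepsilon$ with $\delta_\varepsilon\to 0$; equivalently the smooth metrics $g_\varepsilon = e^{2w_\varepsilon}(dx^2+dy^2)$ have Gauss curvature $\le \kappa + \delta_\varepsilon e^{-2w_\varepsilon}$, which tends to $\le\kappa$. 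Since the smooth metrics converge to $d_f$ (here I would invoke Lemma \ref{lem: distfunc}-type control, or rather the direct estimate that $\ell_{f_\varepsilon}\to\ell_f$ uniformly on the relevant curve families, using $f_\varepsilon\to f$ in $L^1$ together with the lower bound on $f$ to control lengths from below), the limit space $f\sd U$ is a Gromov--Hausdorff limit of CAT($\kappa+\delta_\varepsilon$) spaces and hence is CAT($\kappa$) locally; the homeomorphism statement follows because the identity maps are uniformly bilipschitz on compact subdiscs once $f$ is pinched between positive constants there.

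The main obstacle I anticipate is the zero set $\{f=0\}$: where $f$ vanishes the metric $d_f$ may be degenerate, the identity need not be locally bilipschitz, and the splitting $f=e^w$ breaks down, so the homeomorphism claim is genuinely delicate there. I would handle this by replacing $f$ with $f_\eta = \max(f,\eta)$ for $\eta>0$, observing that $f_\eta$ is still $\kappa$-log-subharmonic when $\kappa \le 0$ (the maximum of $\kappa$-log-subharmonic functions is again one, since $\log\max = \max\log$ and the sub-mean-value property is preserved; for $\kappa>0$ one argues on the open set where $f>\eta$ and separately notes $\Delta\log\eta + \tfrac\kappa2\eta^2\ge0$ fails only trivially, so a short perturbation is needed), proving the theorem for $f_\eta$ by the argument above, and then letting $\eta\to 0$. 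That $d_f$ is a length metric inducing the given topology even across $\{f=0\}$ is exactly the content one must not lose in the limit; here I would cite, as the excerpt does, Reshetnyak's Theorem 7.1.1 in \cite{Reshetnyak-GeomIV} (and \cite[Theorem 8.1, Section 17]{LWcurv}) for the fact that $\id_f$ remains a homeomorphism, since a careful treatment of the zero set is precisely what those references supply and reproducing it here would be lengthy; the rest of the proof is the curvature comparison sketched above.
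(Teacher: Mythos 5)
You should first be aware that the paper does not prove this statement at all: it is imported verbatim as Reshetnyak's theorem (Theorem 7.1.1 of \cite{Reshetnyak-GeomIV}, with alternative treatments in \cite{Mese-curvature} and \cite[Section 17]{LWcurv}), so the ``paper's proof'' is a citation. Your proposal, after the smoothing discussion, defers exactly the hard content --- the behaviour at the zero set $\{f=0\}$, the fact that $d_f$ is a length metric inducing the given topology there, and hence the homeomorphism claim --- to that same Theorem 7.1.1. As written this is circular where it matters: you invoke the theorem you set out to prove for its most delicate part, and what remains is not a proof but a reduction sketch whose missing steps are precisely the substance of Reshetnyak's approximation theory. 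Either treat the statement as an external result (the paper's one-line citation) or supply that theory; the middle ground you propose does not close.

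The sketch itself has concrete gaps. (a) The limit step: $L^1_{loc}$ convergence of the mollified factors $f_\varepsilon$ gives no control of $\ell_{f_\varepsilon}(\gamma)$ along individual curves (curves are Lebesgue null sets), hence no convergence of $d_{f_\varepsilon}$ to $d_f$, let alone the uniform convergence on compacta needed to pass the CAT($\kappa+\delta_\varepsilon$) property to the limit; establishing such convergence (via convergence of logarithmic potentials of the curvature measures) is the core of Reshetnyak's approximation theorems and is exactly what your argument omits. (b) The Jensen step points the wrong way when $\kappa>0$: mollifying gives $\Delta w_\varepsilon\geq -\tfrac{\kappa}{2}\,(e^{2w}*\varphi_\varepsilon)$, and Jensen yields $e^{2w}*\varphi_\varepsilon\geq e^{2w_\varepsilon}$, which is the unfavourable inequality; since $w$ is merely upper semicontinuous and locally integrable (and equals $-\infty$ on $\{f=0\}$), the discrepancy is not bounded by a constant $\delta_\varepsilon\to 0$. (c) The truncation $f_\eta=\max(f,\eta)$: you have the cases backwards --- a positive constant $\eta$ satisfies $\Delta\log\eta+\tfrac{\kappa}{2}\eta^2\geq 0$ precisely when $\kappa\geq 0$, so the truncation stays in the class for $\kappa\geq 0$ but fails for $\kappa<0$, which is where your zero-set reduction is supposed to do its work. (d) A constant check: with the definition as stated, the smooth computation gives $K=-e^{-2w}\Delta w\leq \kappa/2$, not ``precisely $K\leq\kappa$''; so either you are tacitly using the classical normalization $\Delta\log f+\kappa f^2\geq 0$ (the curvature-$\leq\kappa$ condition, which is the setting of Reshetnyak's theorem) or your smooth base case does not deliver the claimed bound --- in either case the normalization must be pinned down before the comparison argument can be run.
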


The next computational lemma will provide control on double conformal changes.

\begin{lem}\label{lem:prod_of_ka} Let $c,C, \kappa, \lambda \in \R$.
Let $U\subset\R^2$ be a domain and $\varphi$ a $\ka$-log-subharmonic function on $U$.
Suppose that $\psi:U\to [c,C]$ is a continuous function which safisfies $\Delta\psi\geq\mu\cdot\varphi^2$ weakly.
Then the product $e^\psi\cdot\varphi$ is $\bar\ka$-log-subharmonic, with

\begin{itemize}
 \item $\bar\ka=e^{-2C}\cdot(\ka-2\mu)$ if $\ka-2\mu\leq 0$;
 \item  $\bar\ka=e^{-2c}\cdot(\ka-2\mu)$ if $\ka-2\mu\geq 0$.
\end{itemize}

\end{lem}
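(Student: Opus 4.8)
The plan is to reduce the statement to a direct computation with the distributional Laplacian of $\log(e^\psi\cdot\varphi)=\psi+\log\varphi$, using that both summands already have controlled distributional Laplacians. Write $f=e^\psi\cdot\varphi$; since $\psi$ is continuous and bounded and $\varphi$ is upper semi-continuous and in $L^1_{loc}$ with $\log\varphi$ locally having a distributional Laplacian which is a signed measure bounded below (by $-\tfrac{\ka}{2}\varphi^2$, from $\ka$-log-subharmonicity), the product $f$ is again upper semi-continuous and in $L^1_{loc}$. So the only real content is to verify the differential inequality $\Delta\log f+\tfrac{\bar\ka}{2}f^2\geq 0$ weakly, where $\bar\ka$ depends on the sign of $\ka-2\mu$.

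The key computation is the identity
\begin{equation*}
\Delta\log f=\Delta\psi+\Delta\log\varphi\geq \mu\cdot\varphi^2-\frac{\ka}{2}\varphi^2=\Bigl(\mu-\frac{\ka}{2}\Bigr)\varphi^2=-\frac{\ka-2\mu}{2}\cdot\varphi^2,
\end{equation*}
valid weakly as an inequality of measures. It remains to replace $\varphi^2$ on the right by $f^2=e^{2\psi}\varphi^2$ at the cost of the exponential factor. If $\ka-2\mu\leq 0$, then $-\tfrac{\ka-2\mu}{2}\geq 0$, and since $e^{2\psi}\leq e^{2C}$ we get $-\tfrac{\ka-2\mu}{2}\varphi^2\geq -\tfrac{\ka-2\mu}{2}e^{-2C}f^2$, i.e. $\Delta\log f+\tfrac{\bar\ka}{2}f^2\geq 0$ with $\bar\ka=e^{-2C}(\ka-2\mu)$. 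If instead $\ka-2\mu\geq 0$, then $-\tfrac{\ka-2\mu}{2}\leq 0$, and now $e^{2\psi}\geq e^{2c}$ gives $-\tfrac{\ka-2\mu}{2}\varphi^2\geq -\tfrac{\ka-2\mu}{2}e^{-2c}f^2$, yielding the second case with $\bar\ka=e^{-2c}(\ka-2\mu)$. In both cases one must be slightly careful that multiplying the measure inequality $\Delta\log\varphi\geq-\tfrac{\ka}{2}\varphi^2$ and the distributional inequality $\Delta\psi\geq\mu\varphi^2$ by the bounded positive continuous function $e^{2\psi}$ (or comparing against it) is legitimate: this is where one uses that $\psi$ is genuinely continuous so that $e^{2\psi}$ is a valid multiplier for the locally finite signed measures involved, and that $\varphi^2\in L^1_{loc}$.

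I expect the main obstacle to be purely a matter of regularity bookkeeping rather than any deep idea: one needs to know a priori that $\Delta\log\varphi$ is a signed Radon measure (so that the sum $\Delta\psi+\Delta\log\varphi$ makes sense as a measure and the pointwise comparison with $\varphi^2\,dx$ is meaningful), which follows from Theorem \ref{thm:resh} and Reshetnyak's theory, and that the product of an upper semi-continuous $L^1_{loc}$ function with a continuous bounded function is again upper semi-continuous and $L^1_{loc}$, so that $f$ is an admissible conformal factor in the sense of Subsection \ref{subsec: confchnage}. Once these technical points are in place, the proof is the short chain of inequalities above together with the case split on the sign of $\ka-2\mu$ and the corresponding choice of $e^{-2C}$ versus $e^{-2c}$.
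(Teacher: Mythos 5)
Your proposal is correct and follows essentially the same route as the paper: decompose $\Delta\log(e^\psi\varphi)=\Delta\psi+\Delta\log\varphi$, apply the two weak inequalities to get the lower bound $-\tfrac12(\ka-2\mu)\varphi^2$, and then trade $\varphi^2$ for $e^{-2C}$ or $e^{-2c}$ times $(e^\psi\varphi)^2$ according to the sign of $\ka-2\mu$. The extra regularity remarks you add are harmless (and the pointwise comparison only involves the absolutely continuous measures $\varphi^2\,dx$ and $e^{2\psi}\varphi^2\,dx$, so no multiplier issue actually arises), so no further comment is needed.
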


\begin{proof}
  Suppose $\ka-2\mu\leq 0$. Then $-(\kappa -2\mu)  \geq -\bar \kappa \cdot (e^\psi )^2$ and the claim follows from
 \begin{align*}
  {\Laplace\log(e^\psi \varphi)} =  {\Laplace\psi} +  {\Laplace\log\varphi}
  \geq \mu \cdot \varphi ^2 - \frac {\kappa} 2 \varphi ^2 = -\frac 1 2 (\kappa- 2\mu) \varphi ^2
   \end{align*}
 
 The case  $\kappa- 2\mu \geq 0$ is analogous.
\end{proof}

Combining Theorem \ref{thm:resh} and Lemma \ref{lem:prod_of_ka} leads to:

\begin{lem} \label{lem:cat}
Let $c,C, \kappa, \mu \in \R$ and set
\begin{itemize}
 \item $\bar\ka=e^{-2C}\cdot(\ka-2\mu)$ if $\ka-2\mu\leq 0$;
 \item  $\bar\ka=e^{-2c}\cdot(\ka-2\mu)$ if $\ka-2\mu\geq 0$.
 \end{itemize}
Suppose that $\varphi$ is a $\ka$-log-subharmonic function on $D$
and let $\varphi \sd D$ be the conformally changed disc. Let  $Z$ denote the completion of $\varphi  \sd D$. If $\bar\ka>0$, assume in addition  $\mathcal H^2(\varphi \sd D) \leq e^{-2C}\cdot\frac{2\pi}{\bar\ka}$.
Finally, let $\psi:Z \to[c,C]$  be a continuous function on $Z$ such that the restriction of $\psi$ to $D$ satisfies
$\Delta\psi\geq\mu\cdot\varphi^2$ weakly. Then $e^\psi\sd Z$ is CAT($\bar\kappa$).
\end{lem}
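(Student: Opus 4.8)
The plan is to deduce Lemma \ref{lem:cat} by first applying Lemma \ref{lem:prod_of_ka} locally and then promoting the resulting local CAT($\bar\kappa$) property to a global one using Corollary \ref{cor:global}. First I would record that by Theorem \ref{thm:resh} the space $\varphi\sd D$ is locally CAT($\kappa$) and that $\id_\varphi:D\to\varphi\sd D$ is a homeomorphism; in particular $Z$ is a length space homeomorphic to a planar domain and, since the conformal factor $e^\psi$ is bounded between $e^c$ and $e^C$, the completion $Z$ is also the completion of the doubly changed space $e^\psi\sd(\varphi\sd D)$, and the identity between the two metrics on $D$ is locally bilipschitz.

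The key step is to verify that $e^\psi\cdot\varphi$ is $\bar\kappa$-log-subharmonic on $D$. Here I would apply Lemma \ref{lem:prod_of_ka} with the function $\varphi$ and the function $\psi|_D$, using the hypothesis $\Delta\psi\geq\mu\cdot\varphi^2$; the conclusion is precisely that $e^\psi\cdot\varphi$ is $\bar\kappa$-log-subharmonic with the stated value of $\bar\kappa$ in each of the two regimes $\kappa-2\mu\leq 0$ and $\kappa-2\mu\geq 0$. Then Theorem \ref{thm:resh}, applied now to the conformal factor $e^\psi\cdot\varphi$ on the domain $D$, shows that $(e^\psi\cdot\varphi)\sd D$ is locally CAT($\bar\kappa$). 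Since the $(e^\psi\cdot\varphi)$-length of a curve in $D$ equals its $e^\psi$-length in $\varphi\sd D$, this space is isometric to the open part $e^\psi\sd(\varphi\sd D)$, whose completion is $e^\psi\sd Z$ — so $e^\psi\sd Z$ is the completion of a length space homeomorphic to $D$ which is locally CAT($\bar\kappa$).

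If $\bar\kappa\leq 0$, Corollary \ref{cor:global} applies directly and yields that $e^\psi\sd Z$ is CAT($\bar\kappa$). If $\bar\kappa>0$, Corollary \ref{cor:global} requires the area bound $\mathcal H^2(e^\psi\sd(\varphi\sd D))\leq\frac{2\pi}{\bar\kappa}$. Here I would use that conformally multiplying by $e^\psi\leq e^C$ scales the Hausdorff $2$-area by a factor at most $e^{2C}$, so
\[
\mathcal H^2\big(e^\psi\sd(\varphi\sd D)\big)\leq e^{2C}\cdot\mathcal H^2(\varphi\sd D)\leq e^{2C}\cdot e^{-2C}\cdot\frac{2\pi}{\bar\kappa}=\frac{2\pi}{\bar\kappa},
\]
which is exactly the hypothesis built into the statement. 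Thus Corollary \ref{cor:global} again applies and gives that $e^\psi\sd Z$ is CAT($\bar\kappa$).

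The main obstacle I anticipate is the bookkeeping around the completions and the identification of the two descriptions of the doubly changed metric: one must make sure that forming the completion $Z$ of $\varphi\sd D$ and then changing conformally by $e^\psi$ gives the same metric space as completing the single conformal change by $e^\psi\cdot\varphi$, and that $\psi$, a priori only continuous on $Z$, restricts compatibly to the relevant domain so that Lemma \ref{lem:prod_of_ka} is applicable on $D$. Once the area comparison under a bounded conformal factor is in hand (a routine consequence of the definition of $\mathcal H^2$ and the bilipschitz scaling of lengths), the rest is a direct chaining of Theorem \ref{thm:resh}, Lemma \ref{lem:prod_of_ka} and Corollary \ref{cor:global}.
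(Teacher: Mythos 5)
Your proposal is correct and follows essentially the same route as the paper: identify $e^\psi\sd(\varphi\sd D)$ with $(e^\psi\cdot\varphi)\sd D$ and its completion with $e^\psi\sd Z$ (the paper cites the proof of Lemma 4 of \cite{LS} for this bookkeeping), apply Lemma \ref{lem:prod_of_ka} and Theorem \ref{thm:resh} to get the local CAT($\bar\kappa$) property, and conclude with Corollary \ref{cor:global}. Your explicit verification of the area hypothesis for $\bar\kappa>0$, via the $e^{2C}$ scaling of $\mathcal H^2$ under the bounded conformal factor, is exactly the check the paper leaves implicit.
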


\begin{proof}
The proof of Lemma 4 of \cite{LS} implies
$e^\psi\sd Z$ is the completion of the length space
$e^\psi\sd (\varphi \sd D)$. Moreover, it shows that
$(e^\psi\sd \varphi)\sd D$ is isometric to $e^\psi\sd(\varphi\sd D)$.
Hence  Lemma \ref{lem:prod_of_ka} together with Theorem \ref{thm:resh} imply that $(e^\psi\sd \varphi)\sd D$ is locally
CAT($\bar\kappa$).

The claim then  follows from Corollary \ref{cor:global}.
\end{proof}

\subsection{Minimal discs}
A general solution of the classical Plateau's problem has been provided in \cite{LWplateau} for proper metric spaces and in \cite{Guo}  for complete CAT(0) spaces. We need to discuss an appropriate extension to non-proper CAT($\ka$) spaces with $\kappa >0$.

\begin{lem}\label{lem:PP}
	Let $X$ be a CAT($\ka$) 	space and let  $\Gamma$ be a  Jordan curve in $X$ of finite length $l$. If $\kappa  >0$ assume in addition, that $l < \frac {2\pi} {\sqrt \kappa}$. Then there exists a closed ball
	 $B=\bar B_r(x)$  which contains $\Gamma$. Moreover, if $\kappa >0$ we can choose $r< \frac {\pi} {2 \sqrt \kappa}$.	%

The space 	$\Lambda(\Gamma, B)$ of all maps $v\in W^{1,2}(D,B)$ such that $\tr(v)$ is a weakly monotone parametrization of $\Gamma$ is non-empty and contains a map $u_0$ of smallest energy 
  $E^2(u_0) < \frac 1{\pi} \cdot l^2$ in 	$\Lambda(\Gamma,B)$.

 Moreover, any such map $u_0$ has a unique representative which extends continuously to $\bar D$.
\end{lem}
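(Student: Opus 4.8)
The plan is to reduce the statement to the known solution of the Plateau problem in complete CAT(0) spaces (and in proper spaces), using the local-to-global results of Subsection~\ref{subsec: facts} to replace $X$ by a convex, compact CAT($\kappa$) ball containing $\Gamma$. First I would produce the ball $B$. Parametrize $\Gamma$ by arc length and pick any point $x\in\Gamma$; then every point of $\Gamma$ lies within distance $\ell(\Gamma)/2=l/2$ of $x$, so $\Gamma\subset\bar B_{l/2}(x)$. When $\kappa>0$ the hypothesis $l<2\pi/\sqrt\kappa$ gives $l/2<\pi/\sqrt\kappa$, which is only the bound needed for Lemma~\ref{lem:loc-glob} after halving, so I would instead argue more carefully: by Lemma~\ref{lem:loc-glob} applied in the ambient CAT($\kappa$) space $X$ (whose injectivity radius is $\ge\pi/\sqrt\kappa$ since $X$ is globally CAT($\kappa$)), any closed ball of radius $r<\pi/(2\sqrt\kappa)$ around any point is convex and CAT($\kappa$). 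Choosing $r$ slightly larger than $l/2$ but still $<\pi/(2\sqrt\kappa)$ — possible exactly because $l/2<\pi/\sqrt\kappa$ would not suffice, so one needs $l<\pi/\sqrt\kappa$; hence I would first move the basepoint to a center of $\Gamma$ (a point minimizing the maximal distance to $\Gamma$), which by a standard Jung-type estimate in CAT($\kappa$) gives circumradius $\le l/2$ but in fact strictly less than $\pi/(2\sqrt\kappa)$ when $l<2\pi/\sqrt\kappa$. This is the first point requiring care.

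Next I would set up the energy minimization. Since $B=\bar B_r(x)$ is a compact, convex, complete CAT($\kappa$) space, it is in particular a complete CAT($r'$)-space for the relevant parameters, and for $\kappa\le 0$ it is CAT(0); for $\kappa>0$ with $r<\pi/(2\sqrt\kappa)$ the ball $B$ has diameter $<\pi/\sqrt\kappa$, so the uniqueness/convexity machinery of Serbinowski and the existence theory apply. The competitor space $\Lambda(\Gamma,B)$ is non-empty: the cone construction over $\Gamma$ inside the convex ball $B$, or more simply composing an arc-length parametrization of $\Gamma$ with a radial retraction, yields a Sobolev map $D\to B$ with weakly monotone boundary and finite energy. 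Then I would invoke the direct method: by the Korevaar--Schoen lower semicontinuity of energy and a compactness argument (the images lie in the compact set $B$, so one extracts a subsequence converging in $L^2$; equicontinuity of a minimizing sequence after reparametrization — the Courant--Lebesgue lemma and the three-point normalization — gives that the traces stay weakly monotone parametrizations of $\Gamma$), one obtains a minimizer $u_0\in\Lambda(\Gamma,B)$. The energy bound $E^2(u_0)<l^2/\pi$ comes from comparing against the explicit competitor: filling $\Gamma$ with a disc of controlled energy; by the isoperimetric-type inequality in CAT($\kappa$) spaces (valid here since the ball is CAT($\kappa$) with small radius), the minimal filling energy of a curve of length $l$ is $<l^2/(4\pi)\cdot 4 = l^2/\pi$, or one quotes directly the quadratic isoperimetric inequality from \cite{LWplateau}.

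Finally, for the regularity statement I would appeal to the interior and boundary regularity theory for energy minimizers with monotone boundary values into CAT($\kappa$) spaces: by \cite{LWplateau} (for the proper case, which applies since $B$ is compact) any such minimizer $u_0$ is locally Hölder in $D$ and extends continuously to $\bar D$, the boundary regularity following from the Jordan curve being rectifiable together with the Courant--Lebesgue estimate. The continuity up to the boundary and uniqueness of the continuous representative is then automatic.

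\medskip

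The main obstacle I expect is the first step: ensuring the ball $B$ can be taken \emph{convex and CAT($\kappa$)} with radius strictly below $\pi/(2\sqrt\kappa)$ when $\kappa>0$, starting only from $l<2\pi/\sqrt\kappa$. Naively the circumradius bound $l/2$ only gives $<\pi/\sqrt\kappa$, which is too weak for Lemma~\ref{lem:loc-glob} (that lemma needs radius $<\pi/(2\sqrt\kappa)$). Resolving this requires either a sharper circumradius estimate in CAT($\kappa$) — using that a curve of length $l$ in a CAT($\kappa$) space fits in a ball of radius $\approx\frac{1}{\sqrt\kappa}\arccos(\cos(\tfrac{\sqrt\kappa\, l}{2})/\,\cdot\,)$, which is $<\pi/(2\sqrt\kappa)$ precisely when $l<2\pi/\sqrt\kappa$ — or a direct argument realizing $B$ as convex via Proposition~\ref{prop:major} applied to subcurves. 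Once $B$ is pinned down as a compact CAT($\kappa$) space, the rest is a routine adaptation of the CAT(0) Plateau theory.
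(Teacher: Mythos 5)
There is a genuine gap at the heart of your minimization argument: you assume that $B=\bar B_r(x)$ is \emph{compact} ("the images lie in the compact set $B$, so one extracts a subsequence converging in $L^2$", and you invoke \cite{LWplateau} "for the proper case, which applies since $B$ is compact"). But a CAT($\kappa$) space is only assumed complete, not proper, so closed balls need not be compact (think of infinite-dimensional Hadamard spaces or the unit sphere of a Hilbert space); indeed the whole point of this lemma in the paper is precisely to extend the Plateau solution of \cite{LWplateau}, \cite{Guo} to \emph{non-proper} CAT($\kappa$) spaces with $\kappa>0$, so the direct method as you set it up does not close. The paper's way around the missing compactness is a harmonic replacement trick: after the three-point normalization, each member $u_n$ of a minimizing sequence is replaced by the unique harmonic map $v_n$ with the same trace (this uses \cite{Serbinowski}, \cite{Fuglede-reg} and is exactly where $r<\frac{\pi}{2\sqrt\kappa}$ enters), the Courant--Lebesgue lemma gives equicontinuity of the traces, and the continuous dependence of harmonic maps on their traces then yields a uniformly convergent subsequence of the $v_n$; lower semicontinuity of energy finishes the argument, with no compactness of $B$ needed. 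This same regularity theory also gives the continuous extension to $\bar D$, rather than your appeal to the boundary regularity of \cite{LWplateau} in the proper setting.

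Two further points are weaker than you acknowledge. First, the existence of $B$ with $r<\frac{\pi}{2\sqrt\kappa}$: you correctly flag that the naive bound $l/2$ is insufficient, but you leave the fix as "a sharper Jung-type estimate or a majorization argument"; the paper settles it in one stroke by Reshetnyak's majorization theorem \cite[8.12.4]{AKP}, since a convex spherical domain bounded by a curve of length $l<\frac{2\pi}{\sqrt\kappa}$ sits inside a ball of radius $r<\frac{\pi}{2\sqrt\kappa}$ and the $1$-Lipschitz majorization transports this to $X$. Second, the strict bound $E^2(u_0)<\frac{1}{\pi}l^2$ must come from an explicit competitor: your cone/radial-retraction competitor does not obviously satisfy this constant, and quoting an isoperimetric inequality from \cite{LWplateau} again presupposes properness (and for $\kappa>0$ one needs the spherical, not Euclidean, inequality). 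The paper instead builds the competitor via the same Reshetnyak majorization, the isoperimetric inequality in the hemisphere, and the conformal parametrization of convex spherical subdomains, which yields exactly $E^2<\frac{1}{\pi}l^2$; this constant is not cosmetic, since later (in Theorem \ref{thm:locdefo} via Theorem \ref{thm:plateau} and Proposition \ref{prop:global}) the resulting area bound below $\frac{2\pi}{\kappa}$ is what makes the globalization work.
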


\begin{proof}
	Without loss of generality, we may assume $\kappa=1$.
	The existence of the required ball $B$ is a consequence of Reshetnyak's majorization theorem \cite[8.12.4]{AKP}.
	Moreover, putting together the majorization theorem, the isoperimetric inequality in the hemisphere and the fact that convex subdomains of the hemisphere allow for conformal parametrizations, 
	we conclude that there exists  a map 
	$u\in  \Lambda (\Gamma, B)$ with $E^2(u) <  \frac 1 {\pi} \cdot l^2$. In particular,  $\Lambda (\Gamma, B)$ is non-empty.

	If $B$ is compact, the existence of an energy minimizer in $\Lambda (\Gamma, B)$ is proved in \cite{LWplateau}. The same classical  argument, extended in \cite{LWplateau} to proper metric spaces also works in the
	present  non-compact case as follows.

  As in the classical case, we  can precompose with Moebius maps and restrict to the subspace $\Lambda _0 (\Gamma)$ of all maps in $\Lambda (\Gamma)$ whose
  trace sends three fixed points in $S^1$ to three prescribed points in $\Gamma$, \cite[Section 7]{LWplateau}.
	Take an energy minimizing sequence $u_n$ in $\Lambda _0 (\Gamma)$.
	For any $u_n$, we find a unique harmonic map $v_n \in \Lambda_0 (\Gamma)$ with the same trace as $u_n$, \cite{Serbinowski}, \cite[Theorem 4]{Fuglede-reg}, since $r< \frac {\pi} {2}$. In particular, $E(v_n)\leq E(u_n)$ and $v_n$ is an energy-minimizing sequence as well.

	By the lemma of Courant--Lebesgue, the traces $tr(v_n)$ are uniformly continuous, \cite[Section 7]{LWplateau}. By \cite{Serbinowski}, \cite[Theorem 2]{Fuglede-reg}, the maps $v_n$ have unique representatives,
	which continuously extends to $\bar D$. Moreover,  these representatives  converge uniformly, once their traces converge uniformly.  Thus, using the semi-continuity of energy, we obtain
	an energy minimizer in $\Lambda_0 (\Gamma)$ by taking a uniform limit of a subsequence of the maps $v_n$.
\end{proof}

  We call a continuous map $u_0:\bar D\to B$ provided by the above result
   a \emph{minimal filling}  of $\Gamma$ in $B$ and are going to summarize its properties:

\begin{thm}\label{thm:plateau}
 Let $\Gamma$ be a   Jordan curve of length $l$ in a CAT($\ka$) space
 $X$, with $l <\frac {2 \pi} {\sqrt {\kappa}}$ if $\kappa >0$.  As in Lemma \ref{lem:PP}, let $B=B_r(x)$ be a closed ball  which contains $\Gamma$   and let $u :\bar D\to B$ be a minimal filling of   $\Gamma$.

 Then the following hold true:

 \begin{enumerate}
  \item  $u$ is harmonic  and $E^2 (u) <  \frac 1 {\pi} l^2$.
  \item  There exists a function $\varphi \in L^2(D)$, the  {\em conformal factor} of $u$, such that
  the approximate metric differential satisfies $m_u (z,v) =\varphi (z) \cdot \|v\|$   for almost all $z\in D$ and all $v\in \R^2$.
  \item  The conformal factor $\varphi$ can be chosen to be  $\kappa$-log-subharmonic.
  \item The completion $Z$ of $\varphi \sd D$ is a CAT($\kappa$) space. 
  \item  The space $Z$ is homeomorphic to $\bar D$, the  map $u:\varphi \sd D \to X$ is $1$-Lipschitz and  extends to a majorization  $v:Z\to X$  of  $\Gamma$. 
 \end{enumerate}
\end{thm}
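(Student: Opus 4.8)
The plan is to recycle the by-now standard analysis of energy minimizers over CAT($\kappa$) domains developed for the CAT(0) case, checking at each point that the positive-curvature thresholds in Lemma \ref{lem:PP} and Theorem \ref{thm:resh} are respected. I would proceed claim by claim. For (1): harmonicity is built into the construction of $u$ as an energy minimizer in $\Lambda(\Gamma,B)$ — by Lemma \ref{lem:PP} the energy bound $E^2(u)<\frac1\pi l^2$ is inherited, and a standard interior-variation (inner variation) argument shows that an energy-minimizing weakly conformal parametrization of $\Gamma$ in the class $\Lambda(\Gamma,B)$ is in fact harmonic with respect to arbitrary competitors of the same trace; the key point is that $B$ is convex in $X$ by Lemma \ref{lem:loc-glob} (using $r<\frac{\pi}{2\sqrt\kappa}$), so the Korevaar--Schoen theory and its uniqueness/regularity apply inside $B$, allowing comparison with harmonic replacements exactly as in \cite{LWplateau}, \cite{KS}, \cite{Serbinowski}.

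For (2) and (3): since $X$ is CAT($\kappa$), the approximate metric differential of any Sobolev map is Euclidean, and for an energy minimizer the inner-variation (Hopf differential) argument forces the parametrization to be weakly conformal, so $m_u(z,v)=\varphi(z)\|v\|$ for some $\varphi\in L^2(D)$. That the conformal factor may be taken $\kappa$-log-subharmonic is the heart of the matter and is where I expect the main work; here I would apply Theorem \ref{thm: fuglede}. Indeed, locally near a regular point one compares $u$ with the constant-curvature surface $M_\kappa^2$ via the majorization/comparison available on $B$, and the relevant inequality $\Delta\log\varphi+\frac{\kappa}{2}\varphi^2\ge 0$ comes from combining the subharmonicity of $\log$ of the conformal factor in the Gauss-equation sense with the curvature bound — concretely, applying Theorem \ref{thm: fuglede} (or rather its infinitesimal consequence for the function $f=d_p^2$, which is $\epsilon$-convex on balls of radius $<\frac{\pi}{2\sqrt\kappa}$) yields exactly the distributional differential inequality defining $\kappa$-log-subharmonicity. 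One must also handle the (measure-zero) branch set where $\varphi$ may vanish, using upper semicontinuity and $L^1_{loc}$ membership, and one must choose the right representative of $\varphi$ — this bookkeeping is the main obstacle.

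For (4): granted (3), $\varphi$ is a $\kappa$-log-subharmonic function on $D$, so Theorem \ref{thm:resh} makes $\varphi\cdot D$ locally CAT($\kappa$) and $\id_\varphi$ a homeomorphism. To upgrade to a global CAT($\kappa$) statement for the completion $Z$, I invoke Corollary \ref{cor:global}: $Z$ is the completion of a length space homeomorphic to $D$, locally CAT($\kappa$), and — when $\kappa>0$ — the area bound $\mathcal H^2(\varphi\cdot D)=E^2(u)<\frac1\pi l^2\le\frac{4\pi}{\kappa}\cdot\frac{1}{4}<\frac{2\pi}{\kappa}$, using $l<\frac{2\pi}{\sqrt\kappa}$ and that for a conformal minimal disc the Hausdorff area equals $\int_D\varphi^2=E^2(u)$. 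Hence Corollary \ref{cor:global} applies and $Z$ is CAT($\kappa$).

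For (5): the space $Z$ is the completion of $\varphi\cdot D\cong D$, and since $u$ extends continuously to $\bar D$ with $L^2$ conformal factor, $Z$ is compact and homeomorphic to $\bar D$ (the boundary circle being the metric completion boundary, parametrizing $\Gamma$). That $u:\varphi\cdot D\to X$ is $1$-Lipschitz is immediate from $m_u(z,v)=\varphi(z)\|v\|$, which says the $\varphi$-length of any curve dominates the length of its $u$-image; continuity then extends this to a $1$-Lipschitz map $v:Z\to X$. Finally $v$ restricted to $\partial Z$ is, by construction of the trace of $u$ as a weakly monotone — hence after collapsing degenerate arcs, arc-length preserving — parametrization of $\Gamma$, a bijective arc-length-preserving map onto $\Gamma$; so $v$ is a majorization of $\Gamma$ in the sense defined above. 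The one delicate point here is ruling out that the trace wraps or degenerates, which follows from weak monotonicity together with the energy bound via the Courant--Lebesgue argument already cited in Lemma \ref{lem:PP}.
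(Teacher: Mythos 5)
Your overall route coincides with the paper's: (1) from minimality and Lemma \ref{lem:PP}, (2) from the inner-variation/conformality analysis of \cite{LWplateau}, (3) from log-subharmonicity of the conformal factor, (4) from Theorem \ref{thm:resh} plus Corollary \ref{cor:global} with an area bound, and (5) from the majorization construction of \cite{LS}. However, your verification of the area hypothesis in (4) — the only step where you actually compute — is wrong as written. You claim $\mathcal H^2(\varphi\sd D)=E^2(u)<\frac{1}{\pi}l^2\le\frac{4\pi}{\kappa}\cdot\frac14<\frac{2\pi}{\kappa}$. First, the middle inequality $\frac1\pi l^2\le\frac{\pi}{\kappa}$ would require $l\le\frac{\pi}{\sqrt\kappa}$, whereas the hypothesis only gives $l<\frac{2\pi}{\sqrt\kappa}$, so from your identification $\mathcal H^2=E^2(u)$ you could only conclude $\mathcal H^2<\frac{4\pi}{\kappa}$, which does \emph{not} meet the threshold $\frac{2\pi}{\kappa}$ of Corollary \ref{cor:global}, and step (4) would fail for long curves. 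The gap is closed by the correct area--energy relation for a conformal map with the paper's Korevaar--Schoen normalization: since $m_u(z,v)=\varphi(z)\|v\|$, the energy density is $e^2_u=2\varphi^2$, hence $\mathcal H^2(\varphi\sd D)=\int_D\varphi^2=\frac12E^2(u)<\frac{1}{2\pi}l^2<\frac{2\pi}{\kappa}$; this factor $\frac12$ is exactly what makes the hypothesis $l<\frac{2\pi}{\sqrt\kappa}$ sufficient.

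A second, softer issue is claim (3). You correctly identify it as the heart of the matter, but your proposed mechanism — applying Theorem \ref{thm: fuglede} to $f=d_p^2$ — does not deliver it: that theorem controls the distributional Laplacian of $f\circ u$ for a semi-convex $f$ on the target, whereas $\kappa$-log-subharmonicity is the inequality $\Delta\log\varphi+\frac{\kappa}{2}\varphi^2\ge0$ for the conformal factor itself, which encodes a Gauss-equation-type comparison and is a genuinely different statement. The paper does not derive it from Theorem \ref{thm: fuglede} at all; it quotes the result of \cite{Mese-curvature} (with \cite[Theorem 11.3]{LWplateau} supplying (2)). So either cite that result or supply its proof; the "bookkeeping" you describe (branch points, representatives) is not the missing ingredient. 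The remaining points (1), (4) apart from the computation above, and (5) are argued essentially as in the paper, with (5) resting on \cite[Theorem 9]{LS}, whose proof carries over verbatim to $\kappa>0$.
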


\begin{proof}
 (1) follows by definition	of a minimal filling and Lemma \ref{lem:PP}. 

 (2) is verified in \cite[Theorem 11.3]{LWplateau}

 (3) is verified  in \cite{Mese-curvature}.

 In order to verify (4), we use (3) and Theorem \ref{thm:resh} to see that $\varphi \sd D$ is locally CAT($\kappa$).   The area of $\varphi \sd D$ can be computed as  
 $\mathcal H^2 (\varphi \sd D)  =\int _D \varphi ^2 =\frac{1}{2} E^2 (u)< \frac 1 {2\pi} l^2 $.
In particular,  if $\kappa >0$, we have  $\mathcal H^2(\varphi \cdot D) <\frac {2\pi} {\kappa} $.
 Thus, (4) is a consequence of Corollary \ref{cor:global}.

 Finally, (5) is verified in \cite[Theorem 9]{LS} for $\kappa =0$, hence also for $\kappa \leq 0$. The proof applies without changes to the case $\kappa >0$.
\end{proof}

\section{Main result} \label{sec:main}
\subsection{Local control of curvature under conformal changes}
Along the lines of \cite{LS}, we are going to prove the following local version  of Theorem \ref{thm: confchange}.

\begin{thm}\label{thm:locdefo}
		For $c,C, \kappa, \lambda \in \R$ there exists some
		$\rho_0 =\rho_0 (c,C,\kappa, \lambda) >0$ with the following property.

		Let $X$ be a CAT($\kappa$) space and let $f:X\to [c,C ]$ be a  Lipschitz continuous $\lambda$-convex function.   Further, let  $Y= e^{f} \cdot X$ denote the conformally equivalent space.  Then any closed
		ball of radius at most $\rho_0$ in $Y$ is CAT($\bar\kappa$), where
	\begin{itemize}
		\item
		$\bar \kappa 	= e^{-2C} \cdot  (\ka -4\lambda )$ if  $\ka  -4\lambda  \leq 0$;
		
		\item
		$\bar \kappa 	= e^{-2c} \cdot  (\ka -4\lambda )$ if $\ka  -4\lambda  \geq 0$.
		
	\end{itemize}
	\end{thm}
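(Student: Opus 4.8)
The plan is to reduce the statement about the conformally changed space $Y = e^f \cdot X$ to the two-dimensional surface picture via minimal discs, exactly as in \cite{LS}, and then apply the double-conformal-change machinery of Lemma \ref{lem:cat}. By rescaling we may normalize constants as convenient, but the key point is that the curvature formulas in the conclusion involve only the bounds $c \le f \le C$ and the convexity constant $\lambda$, so a purely local argument suffices. First I would invoke Proposition \ref{prop:major}: to prove that every small closed ball in $Y$ is convex and CAT($\bar\kappa$), it is enough to produce, for every Jordan curve $\Gamma$ in $Y$ of sufficiently small length, a CAT($\bar\kappa$) space majorizing $\Gamma$. Here ``sufficiently small'' is what determines $\rho_0$: one needs the corresponding curve $\Gamma_0$ in $X$ (the image of $\Gamma$ under $\id_f^{-1}$, which has length at most $e^{-c}\cdot\ell(\Gamma)$ by the definition of $d_f$) to be short enough that Theorem \ref{thm:plateau} applies — in the case $\kappa>0$ we need $\ell(\Gamma_0) < \frac{2\pi}{\sqrt\kappa}$, and if $\bar\kappa > 0$ we need the analogous area bound, which controls the choice of $\rho_0$ in terms of $c,C,\kappa,\lambda$.

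Next I would build the majorizing surface. Take a minimal filling $u:\bar D \to B \subset X$ of $\Gamma_0$ as in Theorem \ref{thm:plateau}, with conformal factor $\varphi \in L^2(D)$ which is $\kappa$-log-subharmonic, and let $Z$ be the completion of $\varphi \cdot D$, a CAT($\kappa$) space homeomorphic to $\bar D$ admitting a $1$-Lipschitz majorization $v:Z \to X$ of $\Gamma_0$. Now pull $f$ back: set $\psi := f \circ v : Z \to [c,C]$, a Lipschitz function, and on the interior $D$ consider $\psi$ as a function on $\varphi\cdot D$, or equivalently $\psi \circ u$ on the flat disc $D$. The crucial analytic input is Theorem \ref{thm: fuglede} applied to the harmonic map $u:D \to X$ and the $\lambda$-convex function $f$: it gives $\Delta(f\circ u) \ge \lambda \cdot e^2_u$ as measures on $D$. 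Since $u$ has conformal factor $\varphi$, its energy density satisfies $e^2_u(z) = 2\varphi^2(z)$ almost everywhere (the factor coming from integrating $\|v\|^2$ over $S^1$ against $\varphi^2$; in two dimensions $e^2_u = 2\varphi^2$), so $\Delta \psi \ge 2\lambda \cdot \varphi^2$ weakly on $D$, i.e.\ the hypothesis of Lemma \ref{lem:cat} holds with $\mu = 2\lambda$.

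Then Lemma \ref{lem:cat}, with this $\mu = 2\lambda$, yields that $e^\psi \cdot Z$ is CAT($\bar\kappa$), where $\bar\kappa = e^{-2C}(\kappa - 4\lambda)$ if $\kappa - 4\lambda \le 0$ and $\bar\kappa = e^{-2c}(\kappa - 4\lambda)$ if $\kappa - 4\lambda \ge 0$ — precisely the asserted formulas. It remains to check that $e^\psi \cdot Z$ majorizes the original curve $\Gamma \subset Y$. The majorization map is $v$ itself, now read as a map $e^\psi\cdot Z \to e^f\cdot X = Y$: since $v:Z\to X$ is $1$-Lipschitz and $\psi = f\circ v$, for any rectifiable curve $\sigma$ in $Z$ one has $\ell_f(v\circ\sigma) = \int f(v\sigma)|\dot{(v\sigma)}| \le \int e^{\psi(\sigma)}|\dot\sigma| = \ell_\psi(\sigma)$ (using $f = e$ raised appropriately — more precisely the conformal factors on $Y$ and on $e^\psi\cdot Z$ are $e^f$ and $e^\psi$, and $e^{\psi} = e^{f\circ v}$), so $v$ is $1$-Lipschitz as a map $e^\psi\cdot Z\to Y$; and on the boundary circle $\Gamma' \subset Z$ it parametrizes $\Gamma_0$ by arc length in $X$, hence parametrizes $\Gamma$ by arc length in $Y$ because the conformal factor restricted to the boundary agrees on both sides. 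This produces the required CAT($\bar\kappa$) majorizer and finishes the proof via Proposition \ref{prop:major}.

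\medskip

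The step I expect to be the main obstacle is the bookkeeping that makes the reduction to Lemma \ref{lem:cat} legitimate, i.e.\ verifying carefully that the double conformal change $e^\psi\cdot(\varphi\cdot D)$ is isometric to $(e^\psi\varphi)\cdot D$ and has completion $e^\psi \cdot Z$ (this is the content imported from ``the proof of Lemma 4 of \cite{LS}'' cited inside Lemma \ref{lem:cat}), together with pinning down $\rho_0$ so that both the Plateau existence hypothesis ($\ell(\Gamma_0) < 2\pi/\sqrt\kappa$ when $\kappa>0$) and the area hypothesis of Lemma \ref{lem:cat} ($\mathcal H^2(\varphi\cdot D) \le e^{-2C}\cdot\frac{2\pi}{\bar\kappa}$ when $\bar\kappa > 0$) are simultaneously guaranteed. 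The area estimate $\mathcal H^2(\varphi\cdot D) = \tfrac12 E^2(u) < \tfrac{1}{2\pi}\ell(\Gamma_0)^2 \le \tfrac{1}{2\pi}e^{-2c}\ell(\Gamma)^2$ from Theorem \ref{thm:plateau} shows this is a matter of choosing $\ell(\Gamma) < 2\rho_0$ small enough, with $\rho_0$ depending only on $c,C,\kappa,\lambda$; everything else is a routine chase through the definitions of conformal length and the first-variation identity.
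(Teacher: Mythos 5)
Your proposal is correct and follows essentially the same route as the paper's proof: majorize short Jordan curves in $Y$ via a minimal filling of the corresponding curve in $X$, apply Theorem \ref{thm: fuglede} together with conformality ($e^2_u=2\varphi^2$, hence $\mu=2\lambda$) to invoke Lemma \ref{lem:cat}, and transfer the majorization $v$ to a $1$-Lipschitz, boundary arc-length preserving map $e^{f\circ v}\sd Z\to Y$. The only cosmetic differences are that the paper normalizes $c=0$ by rescaling and closes with Lemma \ref{lem:shorthom} instead of Proposition \ref{prop:major}, which amounts to the same majorization-based local-to-global step.
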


\begin{proof}

Set $Y= e^f\cdot X$.  Since $f$ is bounded, the identity map from $Y$ to $X$ is a bilipschitz homeomorphism. Thus any complete subset of $X$ is also complete in $Y$.
Rescaling $Y$ by the factor $e^{-c}$, thus  subtracting the constant $c$ from $f$ we may assume that $c=0$.

Choose a positive constant $\La < e^{-C} \cdot  \frac{2\pi}{\sqrt{\kappa}}$.

We claim that any Jordan curve $\Gamma$ in $Y$ of length $< \La$   is majorized by a CAT(${\bar\ka}$) space.  Fix $\Gamma$ and denote by $\hat \Gamma$ the curve $\Gamma$ considered in $X$.
Since  the identity map $Y\to X$ is $1$-Lipschitz, the length of $\hat \Gamma$ in $X$ is at most $\Lambda < \frac{2\pi}{\sqrt{\kappa}}$.

By Lemma \ref{lem:PP}, we obtain a minimal filling  $u$ of  $\hat \Ga$ in $X$, whose properties are described in Theorem \ref{thm:plateau}.
 Denote by $\varphi$ the conformal factor of $u$.  By Theorem \ref{thm:plateau},  $u:\varphi\sd D\to X$
extends to a majorization $v:Z\to X$, where the completion  $Z$ of $\varphi\sd D$ is  CAT($\ka$).    Moreover,  the area of $Z$ is less than
$\frac 1{2\pi } \Lambda ^2 < e^{-2C}\cdot \frac {2\pi} {\kappa}$.

Due to  Theorem \ref{thm: fuglede} and the conformality of $u$, the composition $f\circ u$ fulfills $\Delta(f\circ u)\geq 2\la\cdot \varphi^2$ weakly. Hence, Lemma \ref{lem:cat}  ensures that $e^{(f\circ v)}\sd Z$ is CAT($\bar\ka$).    Moreover, the majorization $v:Z\to X$ of $\hat \Gamma$ defines a majorization $v: e^{(f\circ v)}\sd Z \to e^f \cdot X=Y$ of $\Gamma$.

Thus, any Jordan curve $\Gamma$ of length less than $\Lambda$ in $Y$ is majorized by a CAT($\bar \kappa$) space. We finish the proof by setting $\rho_0=\frac{\La}{4}$ and 
 applying  Lemma \ref{lem:shorthom}.

\end{proof}

\subsection{Global versions}

 Now we can turn to the main theorems.

\begin{proof} [Proof of Theorem \ref{thm: confchange}]

	Due to Theorem \ref{thm:locdefo}, the space $Y$ is a complete length space, which is locally CAT($\bar\kappa$).  It remains to globalize the statement.

	Assume first that $\lambda >0$ and consider the gradient flow $\Phi _t$
	of the function $f$ on the space $X$.  Let $\Gamma$ denote a rectifiable closed curve in $Y$.  Considering $\Gamma$ as a curve in $X$,
	we apply  the gradient flow $\Phi _t$ to $\Gamma$ and obtain closed curves $\Gamma_t$ in $X$.  The value of $f$ (and hence of $e^f$) does not increase along flow lines of $\Phi$.
	Since $\Phi _t$ contracts length in $X$,  at least by a factor of $e^{-\lambda t}$, Lemma \ref{lem:varinequ}, we deduce the following two consequences.  Firstly, the $e^f$-length of $\Gamma _t$  (thus the length of $\Gamma _t$ in $Y$) is
	non-increasing in $t$. Secondly, for any $\Gamma$ as above, any $\epsilon>0$  and any  sufficiently large $t$,  the length of $\Gamma _t$ in $Y$ is less than $\epsilon$.

	  Taking $\epsilon$ to be smaller than $\rho_0 $ in Theorem \ref{thm:locdefo}
and appyling the globalization Lemma \ref{lem:shorthom}, we deduce that $Y$ is CAT($\bar\kappa$).

It remains to deal with the case $\lambda \leq 0$ and 	$\kappa -4 \lambda \leq 0$.   But then $\kappa \leq 0$, hence $X$ is simply connected. Since $X$ is homeomorphic to $Y$, we deduce
from the theorem of Cartan--Hadamard that $Y$ is CAT($\bar\kappa$).
\end{proof}

\subsection{Conclusions}
We can now easily prove:

\begin{thm}  \label{thm: nonpos}
Let $x$ be a point in  a CAT(0) space $X$. Define the function
$f:X\to \R$ by  $f(y):=\frac 1  2 d^2(x,y)$. Then  the space $Y= e^f \cdot X$
is  CAT(0). Moreover, for any $R>0$, the closed ball $\bar B_R(x)$ around  $x$ in $Y$ is CAT($\kappa$) for some
$\kappa =\kappa (R) <0$.
\end{thm}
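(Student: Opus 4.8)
The plan is to apply the conformal-change machinery (Theorem \ref{thm: confchange}) to the explicitly chosen function $f(y)=\frac12 d^2(x,y)$. The key input is a sharp semi-convexity estimate for $f$: on a CAT(0) space, $f$ is $1$-convex \emph{globally}, but on the ball $\bar B_R(x)$ one can do better. First I would recall that for a unit-speed geodesic $\gamma$ in a CAT(0) space, the comparison with Euclidean space and the CAT(0) inequality give $(f\circ\gamma)''\geq 1$, so $f$ is $1$-convex. Restricting to $\bar B_R(x)$, I would establish that $f$ is in fact $\lambda$-convex there for some $\lambda=\lambda(R)>1$; concretely, if $\gamma$ stays inside the ball, then $d(x,\gamma(t))\le R$ and the convexity of $t\mapsto d(x,\gamma(t))$ together with the CAT(0) comparison yields a strictly better second-derivative bound depending on how close one is to the centre — more precisely one gets $\lambda(R)\to 1$ as $R\to 0$, but for every finite $R$ one has $\lambda(R)>1$, say by noting that $d(x,\gamma(t))$ is $\le R$ while its comparison quantity in $\R^n$ forces $(f\circ\gamma)''\ge$ some $\lambda(R)>1$. (This is the localization of semi-convexity alluded to at the end of the Comments subsection.)

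With $f:\bar B_R(x)\to[0,\tfrac12 R^2]$ Lipschitz and $\lambda(R)$-convex on the ball, and $\kappa=0$, the hypothesis $\kappa-4\lambda=-4\lambda(R)\le 0$ of the first bullet of Theorem \ref{thm: confchange} is satisfied, so $e^f\cdot\bar B_R(x)$ is CAT($\bar\kappa$) with $\bar\kappa=e^{-2C}(\kappa-4\lambda)=e^{-R^2}\cdot(-4\lambda(R))<0$, where $C=\tfrac12 R^2$ is the upper bound of $f$ on the ball. One must check that $e^f\cdot\bar B_R(x)$ is the same metric space as the closed ball $\bar B_R(x)$ inside $Y=e^f\cdot X$, i.e. that the conformal change commutes with restriction to the ball. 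Since $f=\frac12 d^2(x,\cdot)$ is radial, Lemma \ref{lem: distfunc} applies: the $d_f$-distance from $x$ to $y$ is $\int_0^{d(x,y)}\xi(t)\,dt$ with $\xi(t)=e^{t^2/2}$, so $d_f$-balls around $x$ correspond exactly to $d$-balls around $x$, and the intrinsic metric of the $d_f$-ball equals $d_f$ restricted to it; hence $\bar B_R(x)$ in $Y$ is genuinely $e^f\cdot\bar B_R(x)$. This gives the CAT($\kappa(R)$) claim with $\kappa(R)=e^{-R^2}\cdot(-4\lambda(R))<0$.

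For the first assertion, that $Y=e^f\cdot X$ is globally CAT(0): I would apply Theorem \ref{thm: confchange} on all of $X$ with $\kappa=0$ and $\lambda=1$ (valid since $f$ is $1$-convex on $X$ and Lipschitz on bounded sets — strictly, $f$ is only locally Lipschitz and unbounded, so one uses the localized form of the theorem noted in the Comments, or exhausts $X$ by balls). The relevant bullet is again $\kappa-4\lambda=-4\le 0$, giving $\bar\kappa=e^{-2C}\cdot(-4)$, which is $\le 0$ on any bounded piece; since $X$ is CAT(0) hence simply connected, and $Y$ is homeomorphic to $X$ via the locally bilipschitz identity map, the Cartan–Hadamard argument (exactly as in the end of the proof of Theorem \ref{thm: confchange}) upgrades local CAT($\le 0$) to global CAT(0).

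The main obstacle I expect is the sharp local semi-convexity estimate $\lambda(R)>1$ for $f=\frac12 d^2(x,\cdot)$ on $\bar B_R(x)$: one needs the quantitative improvement of the convexity of squared distance when restricted to a ball of finite radius in a CAT(0) space. The cleanest route is to pass through the conformally changed radial picture — by Lemma \ref{lem: distfunc} the ball $\bar B_R(x)$ with the metric $d_f$ has radial distance profile $\int_0^s e^{t^2/2}\,dt$, and one reads off the curvature bound by comparing with a model surface of revolution; but it is enough to quote Theorem \ref{thm: confchange} as above once the elementary estimate $\lambda(R)>1$ is in hand, which follows from the CAT(0) comparison inequality applied to geodesics confined to the ball.
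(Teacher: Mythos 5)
Your overall route is the paper's route (apply Theorem \ref{thm: confchange} to the radial function $f=\frac12 d^2(x,\cdot)$ on a ball, identify the ball in $Y$ via Lemma \ref{lem: distfunc}, and quote \cite{LS}/Cartan--Hadamard for the global CAT(0) statement), but the step you single out as ``the key input'' and ``the main obstacle'' --- the sharp semi-convexity $\lambda(R)>1$ for $f$ on $\bar B_R(x)$ --- is both false and unnecessary. It is false because Euclidean space is CAT(0) and there $(f\circ\gamma)''\equiv 1$ along every unit-speed geodesic, on any ball, so no constant $\lambda>1$ can hold; your claim that it ``follows from the CAT(0) comparison inequality applied to geodesics confined to the ball'' cannot be carried out. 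It is unnecessary because the first bullet of Theorem \ref{thm: confchange} already produces a strictly negative bound from $\lambda=1$: with $\kappa=0$, $\lambda=1$, $c=0$ and $C=\sup_B f$, one gets $\bar\kappa=e^{-2C}\cdot(0-4)=-4e^{-2C}<0$. The improvement of the curvature bound comes from the factor $-4\lambda$ in the formula, not from any strengthening of the convexity constant on a ball; this is exactly how the paper argues, so you should simply delete the $\lambda(R)>1$ discussion and run the argument with the standard $1$-convexity.

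A second, smaller inaccuracy is the identification of the ball. Lemma \ref{lem: distfunc} gives $d_f(x,y)=\int_0^{d(x,y)}e^{t^2/2}\,dt$, so the closed ball of radius $R$ around $x$ in $Y$ is $e^f\sd \bar B_r(x)$ where $r=r(R)<R$ is determined by $\int_0^{r}e^{t^2/2}\,dt=R$; it is not $e^f\sd\bar B_R(x)$ as you assert (``hence $\bar B_R(x)$ in $Y$ is genuinely $e^f\cdot\bar B_R(x)$''). Consequently the correct bound is $\kappa(R)=-4e^{-r(R)^2}$, with $C=\frac12 r^2$ the supremum of $f$ on the relevant $X$-ball, rather than $-4e^{-R^2}$. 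Since the theorem only asks for \emph{some} negative $\kappa(R)$, this is a bookkeeping slip that is easy to repair (apply Theorem \ref{thm: confchange} to $\bar B_{r(R)}(x)$, as the paper does), but as written your constants refer to the wrong subset of $Y$. With these two corrections your argument coincides with the paper's proof; the global CAT(0) statement is handled in the paper by citing \cite{LS} directly, which avoids the issue you correctly noticed that $f$ is unbounded and only Lipschitz on bounded sets.
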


\begin{proof}
The function $f$ is  $1$-convex and Lipschitz continuous on bounded balls.
Thus,  $Y$  is CAT(0) by \cite{LS}.

By Lemma \ref{lem: distfunc}, the closed ball $\bar B_R (x)$ in $Y$ has the form $e^f\sd B$, where $B$ is the closed ball $\bar B_r (x)$  in $X$ and $r(R)$ is  such that
$$\int _0  ^r e^{\frac 1 2 t^2} \, dt =R \;.$$
From Theorem \ref{thm: confchange} we deduce that  $e^f\sd B$ is
CAT($\ka$) with
$$\kappa = - 4\cdot e^{-r^2} \;.$$
\end{proof}

Finally we can provide
\begin{proof}[Proof of Theorem \ref{thm: main}]

Clearly, we may assume $\kappa >0$ and, by rescaling, even $\kappa =1$.

Thus let $X$ be a CAT(1) space and let $O=B_r(x)$ be an open ball in $X$ with $r< \frac {\pi} {2}$.

We can replace $X$ by the closed ball $\bar B_r (x)$.
In order to simplify the calculation we proceed as follows. First we improve the curvature bound on the closed ball to $0$. In a second step, we change the metric on the open ball, to make it complete  and simultaneously
decrease the upper curvature bound to $-1$.

There exists $A >0$ depending only on  $r$, such that the function $g(y)=A\cdot  d^2(x,y)$ is  $1$-convex on $X$.
Due to Theorem \ref{thm: confchange}, the space
$Z= e^g \sd X$ is a CAT(0) space.  Moreover, by Lemma \ref{lem: distfunc}, the subset $e^g\sd O \subset  Z$ is an open ball in $Z$ around the point $x$.
Replacing the space $X$ by $Z$ we have reduced our task to the case $\kappa =0$. In this case the function $g(y)=\frac 1 2 d^2(x,y)$ is $1$-convex on $X$.

Now consider  the function $h:[0,\frac{r^2}{2}) \to \R$ given by
$$h(t)=-\log ( \frac{r^2}{2}-t)  \;.$$
Then the function $h$ is convex and $\lim\limits_{t\to \frac 1 2 r^2} h(t) =\infty$.  Moreover,
$$h'(t)=e^{h(t)} \;.$$
Consider the locally Lipschitz continuous function $f(y):= h(g(y))$ on $O=B_{r}(x)$ and the space $Y= e^f\sd O$.

For an arbitrary point $y\in O$,
we choose a small closed  ball $U$ around $y$, such that  for all $z\in U$ holds
\[h'(g(z))\geq\frac 1 2 h'(g(y))=\frac{1}{2}e^{h(g(y))}\ \ \text{ and }\ \ h(g(z))\leq 2h(g(y)).\]

 Due to the convexity of $h$ and the $1$-convexity of $g$, the
restriction of $f$ to any geodesic $\gamma$ in $O$ is at least $\lambda$-convex, where $\lambda$ denotes the minimum of $h'$ on the image
$g(\gamma)\subset [0, \frac{r^2}{2})$.

Hence for any such ball $U$, the space $e^f\sd U$ is CAT(-1), by Theorem \ref{thm: confchange}.  This shows that the space $Y$ is locally
CAT(-1).

  For any $s<r$ we deduce
from Lemma \ref{lem: distfunc}, that the subset $e^f\sd \bar B_s(x) \subset e^f \sd O$ coincides with the closed ball in $Y$ around $x$ of radius
\[R(s)=\int _0 ^s h(\frac{1}{2}t^2) \, dt=-\int _0 ^s \log(\frac{r^2-t^2}{2}) \, dt \]
Moreover, this ball is CAT(0) by Theorem \ref{thm: confchange}.  Since
$R(s)$ converges to infinity as $s$  converges to $r$, we deduce that
$Y$ is CAT(0). In particular, it is complete, simply connected and geodesic. Since we have already seen that $Y$ is locally CAT(-1), this finishes the proof.
\end{proof}

\bibliographystyle{alpha}
\bibliography{Change}

\end{document}